\definecolor{royalblue}{HTML}{59B5F7}
\newcommand{\Z}{{\mathbb Z}}
\DeclareMathOperator{\Sat}{Sat}
\DeclareMathOperator{\Ex}{Ex}
\newtheorem{thm}{Theorem}
\crefname{thm}{Theorem}{Theorems}
\newtheorem{conj}{Conjecture}
\crefname{conj}{Conjecture}{Conjectures}
\newtheorem{prop}[thm]{Proposition}
\theoremstyle{definition}
\newtheorem{rem}{Remark}
\newtheorem{exam}{Example}
\numberwithin{equation}{section}
\numberwithin{table}{thm}
\title{Upper Bounds for Sequence Saturation}
\author[Shihan Kanungo]{Shihan Kanungo}
\address{San Jos\'e State University \ San Jos\'e, CA 95192}
\email{shihan.kanungo@sjsu.edu}
\date{\today}   
\subjclass[2020]{05D99}
\keywords{Sequence Saturation, Generalized Davenport-Schinzel Sequences}
\pgfplotsset{compat=1.18}
\begin{document}
\begin{abstract}
    In this paper, we study the saturation function $\Sat(n,u)$ for sequences. Saturation for sequences was introduced by Anand, Geneson, Kaustav, and Tsai (2021), who proved that $\Sat(n,u)=O(n)$ for two-letter sequences $u$ and conjectured that this bound holds for all sequences. We present an algorithm that constructs a $u$-saturated sequence on $n$ letters and apply it to show $\Sat(n,u)=O(n)$ for several families of sequences $u$, including all repetitions of the form $abcabc\dots$. We further establish $\Sat(n,u)=O(n)$ for a broad class of sequences of the form $aa\dots bb$. In addition, we prove that for most sequences $u$, there exists an infinite $u$-saturated sequence. For three-letter sequences of the form $abc\dots xyz$, where $a,b,c$ are distinct and $xyz$ is a permutation of $abc$, we show---under certain structural assumptions on $u$---that $\Sat(n,u)=O(n)$. Finally, we describe a linear program that computes the exact value of $\Sat(n,u)$ for arbitrary $n$ and $u$.
\end{abstract}
\maketitle

\section{Introduction}
The concept of pattern avoidance lies at the heart of extremal combinatorics, which investigates questions such as: \vskip 5pt
\begin{center}
    \textit{What is the largest possible structure that avoids a given substructure}?
\end{center}  \vskip 5pt
Well-known examples include Ramsey theory and the forbidden subgraph problem. Another important example is the study of \textbf{Davenport-Schinzel sequences}, which are sequences $S$ over an alphabet of $n$ distinct letters that satisfy the following constraints:
\begin{itemize}[topsep=5pt,itemsep=3pt]
    \item For any two distinct letters $a$, $b$, the alternating subsequence  $abab\dots$  of length \newline $s+2$ is forbidden, where $s$ is a nonnegative integer.
    \item No two consecutive letters in $S$ are the same.
\end{itemize}

A key quantity of interest is the function $\lambda_s(n)$, which denotes the length of the longest Davenport-Schinzel sequence over an $n$-letter alphabet. For $s\ge 3$, $\lambda_s(n)$ grows as $n$ times a small, but nonconstant, factor. For instance, \[\lambda_3(n)=\Theta(n\alpha(n)),\] where $\alpha(n)$ is the inverse Ackermann function\footnote{see \cite{PETTIE20111863} for a definition}, known for its extremely slow growth. Originally introduced to analyze linear differential equations (\cite{davenportschinzel65}), Davenport-Schinzel sequences and the function $\lambda_s(n)$ have since found numerous applications in discrete geometry and geometric algorithms (\cite{Atallah85}), as well as application to the saturation function for $0$--$1$ matrices (\cite{Keszegh09}).

A \textbf{generalized Davenport-Schinzel sequence} extends this concept by requiring sequences to avoid a given pattern $u$ on $r$ distinct letters while also being $r$\textbf{-sparse}, meaning every contiguous subsequence
of length $r$ has all letters distinct. Analogous to $\lambda_s(n)$, we define the \textbf{extremal function} $\Ex(n,u)$, which represents the longest $r$-sparse sequence on $n$ letters that avoids $u$. The growth of $\Ex(n,u)$ has been studied in \cite{PETTIE20111863}, and generalized Davenport-Schinzel sequences have many significant applications.

This paper focuses on a related function, the \textbf{saturation function} $\Sat(n,u)$. Saturation functions have been extensively studied in other combinatorial settings, including graphs \cites{KászonyiTuzaGraphSaturation,DAMASDI2021103321}, posets \cite{Ferrara2017TheSN}, and 0-1 matrices \cite{Fulek2020SaturationPA}. The notion of saturation for sequences was introduced in \cite{ANAND2025382}, where several fundamental results were also established. Specifically, given a forbidden sequence $u$ with $r$ distinct letters, we say that a sequence $s$ on a given alphabet is \textbf{$u$-saturated} if $s$ is $r$-sparse, $u$-free, and adding any letter from the alphabet to an arbitrary position in $s$ violates $r$-sparsity or induces a copy of $u$. Notably, it was shown that if $u$ consists of only two distinct letters, then $\Sat(n,u)=O(1)$ or $\Theta(n)$, leading to the open question of whether this dichotomy extends to arbitrary sequences $u$. In fact, proving that $\Sat(n,u)=O(n)$ for all sequences $u$ is sufficient to establish this dichotomy. Thus, we aim to address the following conjecture:

\begin{conj}[\cite{ANAND2025382}]\label{conj: main}
    We have $\Sat(n,u) = O(n)$ for any sequence $u$.
\end{conj}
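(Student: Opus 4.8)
The plan is to prove the linear bound by constructing, for every fixed forbidden sequence $u$ on $r$ distinct letters, an explicit $u$-saturated sequence on $n$ letters whose length is $O_u(n)$. I would build such a sequence in two stages: a \emph{kernel} $K=K(u)$ of length $O_u(1)$ supported on a bounded number of distinguished letters, followed by a \emph{padding stage} in which each of the remaining $n-O_u(1)$ letters is inserted a fixed number $c=c(u)$ of times into carefully selected gaps of the sequence built so far. This makes $|s|=O_u(n)$ immediate, so the whole problem reduces to choosing the gaps so that (i) the sequence remains $u$-free at every step and (ii) the final sequence is saturated. The algorithm of this paper already realizes exactly this template for the families $abcabc\cdots$, a broad subclass of $aa\cdots bb$, and the three-letter patterns $abc\cdots xyz$; the aim would be to make the choice of $K$, of $c$, and of the insertion rule uniform across all $u$.

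For (i), I would attach to each letter a \emph{profile} recording, say, which blocks of $K$ its occurrences fall between, and maintain an invariant on these profiles strong enough that any hypothetical copy of $u$ in the current sequence would force some letter into a profile it does not have — a pigeonhole-style contradiction. For (ii), I would verify saturation gap by gap. Fix a gap $g$ and a letter $a$. If some occurrence of $a$ lies within $r$ positions of $g$, inserting $a$ there breaks $r$-sparsity; since each letter occurs only $c(u)$ times this disposes of only $O_u(1)$ of the $\Theta(n)$ gaps, so for every other gap one must instead show that inserting $a$ at $g$ completes a copy of $u$. The mechanism I would aim for is a kernel $K$ that places, around each of its gaps, a ``near-copy'' of $u$ — a subsequence equal to $u$ with a single position removed, that removed position being one whose letter can be instantiated arbitrarily — together with the guarantee that the padding stage never destroys these near-copies and never inadvertently creates a full copy of $u$.

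The main obstacle, I expect, is precisely the tension in (ii): a sequence that is $u$-free yet carries, at every one of its linearly many gaps, a near-copy of $u$ completable by essentially \emph{any} fresh letter. When $u$ has a complicated repetition pattern, guarding a single gap against many different letters at once seems to require planting many near-copies of $u$, and the straightforward ways to do this tend either to produce a genuine copy of $u$ or to inflate the length beyond $O(n)$. The families treated here are exactly those for which this conflict can be resolved by hand; resolving it for arbitrary $u$ — and, on the lower-bound side, ruling out a mildly superlinear $\Sat(n,u)$ in the spirit of the known superlinearity of $\Ex(n,u)$ — is what leaves \Cref{conj: main} open. A natural intermediate step would be a reduction theorem showing that the class of $u$ with $\Sat(n,u)=O(n)$ is closed under letter duplication and under concatenation of forbidden sequences; combined with the two-letter base case of \cite{ANAND2025382}, this would cut the conjecture down to a more tractable family of ``irreducible'' patterns.
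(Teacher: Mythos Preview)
The statement you are addressing is \emph{Conjecture~\ref{conj: main}}, not a theorem, and the paper does not prove it. The paper states it as an open problem from \cite{ANAND2025382} and then establishes it only for the three families listed in \cref{thm: main}; the general case is explicitly left open. So there is no ``paper's own proof'' to compare against.

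Your proposal is accordingly not a proof either, and you are candid about this: you sketch a kernel-plus-padding template, note that the paper's constructions for $abcabc\cdots$, for irreducible $aa\cdots bb$, and for the three-letter family of \cref{thm: 3-letter sequences} all fit that template, and then identify the obstruction to making it uniform --- namely, arranging near-copies of $u$ at linearly many gaps, completable by an arbitrary letter, without accidentally assembling a full copy of $u$. That diagnosis is fair and matches where the paper's arguments actually break down: in \cref{thm: aa...bb} the block structure works because irreducibility confines any copy of $u$ to a single block, and in \cref{thm: 3-letter sequences} the inequality $f_0(u)\ge f_1(u)+5$ is exactly the slack needed to rule out copies that straddle two overlapping $s_3(u)$-blocks. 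Outside those hypotheses the case analysis in the proof of \cref{thm: 3-letter sequences} no longer closes, which is precisely the tension you describe.

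As a research plan the proposal is reasonable, but as a proof it has a genuine gap: none of the steps (choice of $K$, invariant on letter profiles, verification that every gap carries a completable near-copy) is carried out for general $u$, and you give no argument that such choices exist. Your suggested reduction --- closure of the class $\{u:\Sat(n,u)=O(n)\}$ under letter duplication and concatenation --- would indeed be useful, but note that the paper already handles the concatenation-reducible case implicitly (if $u=u_1u_2$ with disjoint alphabets then $u$ is not irreducible and one can saturate against $u_1$ alone), so the real content would be the letter-duplication closure, which is not addressed anywhere in the paper and for which you offer no mechanism.
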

Similar dichotomies have been proven in other settings, such as for graphs \cite{KászonyiTuzaGraphSaturation} and for 0-1 matrices \cite{Fulek2020SaturationPA}. However, proving the $O(n)$ bound in the context of sequence saturation appears to be significantly more challenging. 

We resolve \cref{conj: main} in the following cases.

For a sequence $u= abc\ldots xyz$ with 3 distinct letters $a,b,c$, let
\begin{align*}
    f_0(u) &= \#\{\,\text{consecutive pairs of the form } ab,bc,ca\text{ in $u$}\}, \\
    f_1(u) &= \#\{\,\text{consecutive pairs of the form } ac,ba,cb\text{ in $u$}\}, \\
    f_2(u) &= \#\{\,\text{consecutive equal-letter pairs in } u\,\}.
\end{align*}

\begin{thm}\label{thm: main}
    \cref{conj: main} holds for the following classes of sequences $u$:
    \begin{enumerate}
        \item $u=abcabc\cdots$,
        \item $u=aa\dots bb$ and $u$ cannot be decomposed into two subsequences $u=u_1u_2$ that have no letters in common,
        \item $u=abc\dots xyz$ is a three-letter sequence with $a,b,c$ distinct, and
        \[xyz\in \{abc,bca,cab\}, \qquad f_0(u)\ge f_1(u)+5.\]
    \end{enumerate}
\end{thm}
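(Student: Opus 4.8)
The plan is to invoke our general construction, which on input $(u,n)$ returns a $u$-saturated $r$-sparse sequence $s$ on $n$ letters, and then to show that on each of the three families its output already has length $O(n)$. In every case $s$ has the same anatomy: a constant number of \emph{hub} letters---two of them, $h_1$ and $h_2$, when $u$ has $r=3$ distinct letters, as in parts (1) and (3)---are arranged in a long periodic word $P=(h_1\,h_2)^t$ of length $\Theta(n)$, while the remaining $n-O(1)$ \emph{peripheral} letters each occur only $O_u(1)$ times, dropped one at a time into the interior of $P$ with several complete periods of $P$ flanking each occurrence. Granting that the construction applies at all, $|s|=O(n)$ is then immediate, so all of the content lies in verifying that $s$ is $u$-free and, above all, saturated.

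For parts (1) and (3) I would extract a single embedding lemma. Recode $u=abc\cdots xyz$ by sending one of the three letters, call it $\ell$, to a peripheral symbol and the other two to $h_1,h_2$ in the cyclic order recorded by $xyz$; deleting the $\ell$'s leaves a word over $\{h_1,h_2\}$, the \emph{stub}, which the construction installs for every peripheral letter (with $\ell$ reintroduced a controlled number of times). When $u$ is ``forward-heavy''---automatic for $u=(abc)^k$, and exactly the content of $f_0(u)\ge f_1(u)+5$ in part (3)---the stub embeds into boundedly many periods of $P$, and the surplus $f_0(u)-f_1(u)\ge 5$ is the margin that lets one pad each stub with enough surrounding periods so that two things hold at once: (i) no copy of $u$ is yet present, since the only letters occurring often enough to fill every role of $u$ are $h_1$ and $h_2$, and there are only two of them; and (ii) reinserting $\ell$ anywhere---to the left of its occurrences, to the right, or at either end of $s$---completes a copy of $u$ whose remaining two letters are hubs. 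Every insertion of a hub letter, and every insertion of $\ell$ into a window already containing $\ell$, instead violates $r$-sparsity, because the periods of $P$ saturate those windows.

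Part (2) runs the same machine with ``forward-heaviness'' replaced by the hypothesis that $u=a\,a\,\cdots\,b\,b$ has no decomposition $u=u_1u_2$ into parts with no common letter. Here the doubled prefix $aa$ and doubled suffix $bb$ serve as anchors: the relevant stub is $u$ with one of the two boundary letters cut down to a single occurrence, and indecomposability is precisely what lets the construction choose the hub so that every would-be copy of $u$ in the output is forced to spend a peripheral letter in a configuration the stubs were designed to exclude, which gives $u$-freeness; the doubled anchors then force any reinsertion of that peripheral letter, or of a hub letter, to recreate $u$ or to break $r$-sparsity.

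The crux throughout is the saturation check: for each of the $\Theta(n)$ insertion positions and each of the $n$ letters, one must confirm that the insertion breaks $r$-sparsity or completes $u$. The delicate positions come in two flavors---those abutting an existing occurrence of the inserted letter, where the $r$-sparsity obstruction and the $u$-completion must cover complementary ranges of positions with nothing left uncovered between them, and positions near the two ends of $s$, where only one side has periods of $P$ to spend. Keeping these verifications uniform in $n$ is exactly what forces the periodic hub and the $O_u(1)$ multiplicities, and the hypotheses $f_0(u)\ge f_1(u)+5$ and indecomposability are calibrated so that ``enough periods around each peripheral letter to finish $u$ after one more insertion'' never slips into ``$u$ is already present.'' I expect the main difficulty to be exactly this: making that buffer argument watertight simultaneously for all peripheral letters and all insertion positions, including the endpoints.
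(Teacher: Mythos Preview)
Your hub-and-peripheral architecture is a reasonable heuristic, but it is not the paper's approach, and the gap you yourself flag at the end is real and unresolved. The paper in fact builds exactly this construction in \cref{thm: infinite sequences} (for doubly infinite sequences), and the Remark immediately following it observes that the finite truncation is only \emph{almost} $u$-saturated: insertions near the two ends need not complete a copy of $u$, because only one side of the insertion point carries enough periods of $P$. You say you ``expect the main difficulty'' to be making the buffer argument watertight at the endpoints, but you give no mechanism, and the paper poses exactly this as an open question rather than a solved one. So as written the proposal is a strategy, not a proof, and the missing step is precisely the hard one.

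What the paper actually does for part~(3)---and, up to small variations, for part~(1)---is abandon fixed hubs entirely. The construction $s[n]$ is a \emph{chain}: $s[3]=s_3(u)$, and $s[n]$ is obtained from $s[n-1]$ by appending a copy of $s_3(u)$ on the last two letters of $s[n-1]$ together with one fresh letter. Every letter thus appears $O_u(1)$ times, there are no $\Theta(n)$-multiplicity hubs, and semisaturation reduces to the known $u$-saturation of each copy of $s_3(u)$, so the endpoints cause no trouble. The hypothesis $f_0(u)\ge f_1(u)+5$ is not used for saturation at all; it is used only in the \emph{avoidance} proof, via a case analysis (comparing lengths against \eqref{eqn: length of s_3}) on which three letters of $s[n]$ could carry a copy of $u$. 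Your sketch inverts this: you spend the $f_0-f_1$ margin on saturation and treat avoidance as routine, which is backward relative to what the chain construction needs. For part~(2) your description also diverges from the statement: $u=aa\dots bb$ may have arbitrarily many distinct letters $r$, and the paper uses no hubs but rather concatenates $\lfloor n/r\rfloor$ copies of $s_r(u)$ on pairwise \emph{disjoint} alphabets, with irreducibility invoked solely to confine any hypothetical copy of $u$ to a single block.
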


The organization of this paper is as follows. In \cref{sec: defns} we define the saturation and extremal functions for sequences. In \cref{sec: saturation}, we introduce \cref{alg: algorithm}, which produces a $u$-saturated sequence on $n$ letters. Using this, we give an example of a sequence $u$ with $\Sat(n,u)=O(n)$ even though $\Ex(n,u)$ grows faster than linear, and we additionally prove point (1) of \cref{thm: main}. 
We next prove point (2) of \cref{thm: main} in \cref{thm: aa...bb}. Finally, we show in \cref{thm: infinite sequences} that for a large class of sequences $u$, there exists a doubly infinite sequence that is $u$-saturated. In \cref{sec: 3-letter sequences}, we focus on sequences $u$ with $r=3$ letters, and prove point (3) of \cref{thm: main} in \cref{thm: 3-letter sequences}. In \cref{sec: linear program}, we introduce a linear program in \cref{thm: linear program} that computes the exact value of the $\Sat(n,u)$, similarly to the one for 0-1 matrices in \cite{brahms01matrices}. In \cref{tab: linear program}, we display the results of the linear program for some short sequences $u$ and small $n$. 
 
A further direction to investigate, after \cref{conj: main} is resolved, would be to classify when $\Sat(n,u) = O(1)$ or $\Theta(n)$. The corresponding question for 0-1 matrices was investigated by Geneson \cite{Geneson2020AlmostAP}, who showed that almost all permutation matrices have bounded saturation function, and by Berendsohn \cite{Berendsohn2021AnEC}, who completely resolved the classification for permutation matrices. However, the general question remains open.

\section{Notation and Definitions}\label{sec: defns}
For a sequence $s$, let $s^t$ denote the concatenation of $t$ copies of $s$. Let $[n]$ denote the set $\{1,\dots, n\}$.

Throughout this paper we consider a finite sequence $u$ that contains $r$ distinct letters. We say sequences $u$ and $v$ are \textbf{isomorphic} if $v$ can be obtained from $u$ by a one-to-one renaming of letters. We say a sequence $s$ \textbf{avoids} $u$ if $s$ does not contain a subsequence isomorphic to $u$. We refer to any subsequence of $s$ isomorphic to $u$ as a \textbf{copy} of $u$ in $s$. We say that $s$ is \textbf{$r$-sparse} if any $r$ consecutive letters in $s$ are pairwise distinct. We say that a sequence $s$ with letters in an alphabet $A$ is \textbf{$u$-saturated} if $s$ is $r$-sparse, $s$ avoids $u$, and inserting any letter in $A$ between two consecutive letters in $s$ either causes $s$ to not be $r$-sparse or causes $s$ to contain a copy of $u$. Finally, we say that $s$ is \textbf{$u$-semisaturated} if $s$ is $r$-sparse and inserting a letter either violates $r$-sparsity or induces a copy of $u$. The notions of saturation and semisaturation are the same, except for the fact that saturated sequences have to avoid $u$.

We define the \textbf{extremal function} $\Ex(n,u)$ to be the maximum length of a sequence $s$ on $n$ letters such that $s$ avoids $u$ and is $r$-sparse. We define the \textbf{saturation function} $\Sat(n,u)$ to be the minimum length of a $u$-saturated sequence $s$ on $n$ letters. Trivially, we have $\Sat(n,u)\le \Ex(n,u)$ because every $u$-saturated sequence is $r$-sparse and avoids $u$.

A \textbf{0-1 matrix} is a matrix whose entries are $0$ or $1$. By standard convention, we represent 0-1 matrices by replacing the ones with bullets $\bullet$ and the zeros with empty spaces. For example, we write
\[\begin{pmatrix}
    \bullet & \bullet\\
    \bullet & 
\end{pmatrix}\quad \text{for}\quad \begin{pmatrix}
    1 & 1 \\
    1 & 0
\end{pmatrix}.\]
We say that a 0-1 matrix $A$ \textbf{contains} another 0-1 matrix $B$ if $A$ has a submatrix with the same dimensions as $B$ that is entrywise greater than $B$.

\section{General Saturation of Sequences}\label{sec: saturation}
We are interested in showing that $ \Sat(n,u) = O(n) $ for any sequence $ u $. This has been shown for $ 2 $-letter sequences $ u $ in \cite{ANAND2025382}, but the cases for $ 3 $-letter sequences and higher remain open.

\subsection{Algorithm}
In \cref{alg: algorithm} (see below), we describe a method to construct a $u$-saturated sequence on $n$ letters. We say ``$x$ can be properly inserted into $s$'' as a shorthand to mean ``$x$ can be inserted into $s$ without violating $r$-sparsity and without inducing a copy of $u$''. It is evident from the definition of the algorithm that it will always produce a $u$-saturated sequence. To demonstrate that $ \Sat(n,u) = O(n) $, we must construct a $ u $-saturated sequence on $ n $ letters for every $ n $, where the length of the sequence grows at most linearly with $ n $. In this subsection, we use \cref{alg: algorithm} to accomplish this task for a variety of sequences $u$.

\begin{algorithm}
\setstretch{1.35}
\begin{algorithmic}[1]
\State \textbf{Input:} Alphabet $A = \{1, \dots, n\}$, forbidden sequence $u$
\State \textbf{Output:} $u$-saturated sequence
\State Initialize the sequence: $s \gets 1, 2, \dots, r-1$ \Comment{Initial sequence avoids $u$}
\While{it is possible to extend the sequence}
    \For{each letter $x \in A$}
        \If{$x$ can be \textit{properly inserted} into $s$}
            \State Insert $x$ appropriately into $s$ to form $s'$ \Comment{Smallest $x$, leftmost position}
            \State Update $s \gets s'$ \Comment{New sequence}
            \State \textbf{break} \Comment{Exit loop after the first valid insertion}
        \EndIf
    \EndFor
\EndWhile
\State \textbf{Return} $s$ \Comment{Final sequence is $u$-saturated}
\end{algorithmic}
\caption{Constructing a $u$-Saturated Sequence}\label{alg: algorithm}
\end{algorithm}

To represent long sequences, we use the following method. Given a sequence $s = s_1 \dots s_k$, we plot the points $(i, s_i)$ for $i = 1, \dots, n$. For instance, the sequence $1, 2, \dots, n$ forms a line with a slope of $1$, while the sequence $1, 1, \dots, 1$ results in a horizontal line. 

Trivially, we know that $ \Sat(n,u) \leq \Ex(n,u) $, so if $ \Ex(n,u) = O(n) $, \cref{conj: main} immediately follows. A \textbf{nonlinear sequence} is defined as one for which $ \Ex(n,u) $ grows faster than linearly. Therefore, we only need to focus on nonlinear sequences.

In \cite{PETTIE20111863}, the extremal function for sequences, particularly $3$-letter sequences, was studied. It was shown that the sequence $ u = abcacbc $ is nonlinear and ``minimally nonlinear,'' meaning that no subsequence of $ u $ is nonlinear. Thus, $u=abcacbc$ is the first sequence that we will consider. If we run \cref{alg: algorithm} on $u=abcacbc$, we notice that a pattern appears. In \cref{fig: abcacbc}, we display the output of the algorithm using the representation described earlier.

\begin{figure}[ht]
\centering
\begin{tikzpicture}
\begin{axis}[
    width=12cm,
    height=8cm,
    line width=1pt,
    font=\footnotesize,
    xmin=0,
    xmax=42,
    grid=none,
]
\addplot[
    only marks,
    mark=*,
    color=black!55
]
coordinates {
(1,1) (2,2) (3,10) (4,1) (5,2) (6,9)
(7,1) (8,2) (9,8) (10,1) (11,2) (12,7)
(13,1) (14,2) (15,6) (16,1) (17,2) (18,5)
(19,1) (20,2) (21,4) (22,1) (23,2) (24,3)
(25,1) (26,2) (27,3) (28,1) (29,2) (30,10)
(31,9) (32,8) (33,7) (34,6) (35,5) (36,4)
(37,6) (38,7) (39,8) (40,9) (41,10)
};
\end{axis}
\end{tikzpicture}
\caption{\cref{alg: algorithm} on $u=abcacbc$.}\label{fig: abcacbc}
\end{figure}

This example is the $n=10$ case. The pattern (which is easily seen from the figure) corresponds to the sequence
\[1,2,n,1,2,n-1,\dots,1,2,3,1,2,3,1,2,n,n-1,\dots,6,5,4,6,7,\dots,n.\]
From here it is straightforward (albeit quite tedious) to verify that this is a $u$-saturated sequence, for any $n$. Since the length of this sequence grows linearly with $n$, it follows that
\begin{prop}
    We have $\Sat(n,abcacbc)=O(n)$.
\end{prop}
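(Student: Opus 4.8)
The plan is to verify directly that the explicit sequence
\[
s \;=\; 1,2,n,\,1,2,n-1,\,\dots,\,1,2,3,\,1,2,3,\,1,2,n,n-1,\dots,6,5,4,\,6,7,\dots,n
\]
is $u$-saturated for $u = abcacbc$, and then observe that $|s| = O(n)$. Concretely, write $s = P \cdot Q \cdot R$, where $P = (1,2,n)(1,2,n-1)\cdots(1,2,3)$ is the ``descending staircase'' block of blocks $1,2,k$ for $k$ from $n$ down to $3$, then $Q = (1,2,3)$ is an extra copy, and $R = n,n-1,\dots,4,6,7,\dots,n$ is the ``V-shaped tail.'' First I would check the two easy conditions: (i) $s$ is $3$-sparse, which is immediate since no block $1,2,k$ ever repeats a letter within any window of $3$ consecutive positions (one checks the handful of block boundaries and the junctions $P\!-\!Q$, $Q\!-\!R$, and inside $R$ the turning point $\dots,4,6,\dots$); and (ii) $s$ avoids $u = abcacbc$. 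For (ii), the key structural observation is that in $u$ the letter playing the role of $a$ appears only as the very first symbol; so any copy of $u$ in $s$ would have to start at an occurrence of some letter $x$, followed later by a copy of $bcacbc$ on two other letters $y,z$, i.e.\ $y\,z\,y\,z\,y\,z$ with the first $x$ preceding all of it. One then argues that after deleting a single leading symbol, no suffix of $s$ contains $yzyzyz$ for distinct $y,z$: in $P$ each value other than $1,2$ appears exactly once, so the only candidates for $\{y,z\}$ are $\{1,2\}$, but $1$ and $2$ alternate as $1,2,1,2,\dots$ only within $P$ (they are separated by the large values, so $1,2,1,2,1,2$ as a \emph{subsequence} does occur — here one must be careful and instead track that $u=abcacbc$ needs the pattern $a$ then $bcbc$-with-an-$a$-$c$ interleave, so the genuine constraint is the relative order $a\cdots b\cdots c\cdots a\cdots c\cdots b\cdots c$). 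This forces a short case analysis on which of the three ``roles'' each of the few repeated values $\{1,2\}$ and the structure of $R$ can play.

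Next I would verify saturation, i.e.\ that no letter can be properly inserted anywhere. Fix a gap between consecutive letters $s_i, s_{i+1}$ and a letter $x \in [n]$. If inserting $x$ keeps $3$-sparsity, I must exhibit a copy of $u = abcacbc$ using the inserted $x$. The cleanest way is to catalogue the gaps of $s$ into a constant number of types (gaps inside a block $1,2,k$; gaps between blocks in $P$; the $P\!-\!Q$ and $Q\!-\!R$ junctions; gaps in the descending part of $R$; the turn of $R$; gaps in the ascending part of $R$), and for each type and each sparsity-allowed value of $x$, point to a fixed embedding of $abcacbc$ that becomes available. For instance, the presence of the extra block $Q=(1,2,3)$ together with the ascending tail $6,7,\dots,n$ of $R$ is exactly what kills insertions of large letters near the end; the descending staircase $P$ kills insertions in the bulk by providing the needed $a\cdots c\cdots b$ skeleton from three distinct staircase values. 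Since $u$ has length $7$, each such embedding is a seven-term subsequence, so each verification is a finite check; the number of (gap type, $x$-class) pairs is bounded by an absolute constant, so the whole argument is finite — it is exactly the ``straightforward albeit tedious'' computation referred to in the text, and I would present it as a table of cases rather than prose.

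Finally, $|P| = 3(n-2)$, $|Q| = 3$, and $|R| = (n-3) + (n-5) + 1 = 2n-7$, so $|s| = 5n - 10 = O(n)$, and since $s$ is $u$-saturated this gives $\Sat(n,abcacbc) \le 5n - 10 = O(n)$, as claimed. The main obstacle is the saturation direction: showing avoidance is a single clean structural lemma about where the ``$a$-role'' can sit, but proving that \emph{every} gap is blocked for \emph{every} sparsity-compatible letter requires getting the case division exactly right, and in particular correctly identifying which of the three letter-roles of $abcacbc$ each inserted letter must play in each region — the danger is missing a gap/letter combination where in fact a legal insertion exists, which would mean the displayed sequence is not actually saturated and the block structure (e.g.\ the size of the $R$-tail or the need for the extra $Q$ block) has to be adjusted. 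I would therefore double-check the boundary regions ($P\!-\!Q\!-\!R$ junctions and both ends of $R$) with particular care, since those are where the algorithm's output deviates from a pure periodic pattern and where an off-by-one in the construction is most likely to hide.
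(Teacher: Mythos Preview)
Your approach is exactly the paper's: exhibit the explicit algorithm-output sequence and verify by direct case analysis that it is $u$-saturated; the paper itself says nothing beyond ``it is straightforward (albeit quite tedious) to verify.'' Two small slips to fix before you carry out the details: (i) your decomposition $s = P\cdot Q\cdot R$ drops the extra $1,2$ that sits between the second $1,2,3$ block and the descending tail --- the sequence you \emph{displayed} has it, but $P,Q,R$ as defined do not, and accordingly the true length is $5n-9$, not $5n-10$; (ii) your avoidance sketch opens with ``the letter playing the role of $a$ appears only as the very first symbol,'' but $a$ occurs twice in $abcacbc$ (positions $1$ and $4$), so that line of reasoning needs to be restarted.
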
 

We can do this for different sequences. For every case we tried, we got a repeating pattern like the one above. Some of these had minor variations that depended on the value of $n\pmod{d}$ for some $d$; but the $O(n)$ bound still follows. Some more examples are shown in \cref{fig: more examples}.

\medskip

\begin{figure}[ht]
\begin{tikzpicture}
\begin{axis}[
    width=8cm,
    height=6cm,
    line width=1pt,
    font=\footnotesize,
    xmin=0,
    xmax=39,
    grid=none,
]
\addplot[
    only marks,
    mark=*,
    color=black!55
]
coordinates {
(1, 10)
(2, 9)
(3, 8)
(4, 10)
(5, 7)
(6, 9)
(7, 6)
(8, 8)
(9, 10)
(10, 5)
(11, 7)
(12, 9)
(13, 4)
(14, 6)
(15, 8)
(16, 2)
(17, 5)
(18, 7)
(19, 3)
(20, 4)
(21, 6)
(22, 1)
(23, 2)
(24, 5)
(25, 3)
(26, 4)
(27, 1)
(28, 2)
(29, 3)
(30, 1)
(31, 2)
(32, 4)
(33, 5)
(34, 6)
(35, 7)
(36, 8)
(37, 9)
(38, 10)
};
\end{axis}
\end{tikzpicture}
\hspace{10mm}
\begin{tikzpicture}
\begin{axis}[
    width=8cm,
    height=6cm,
    line width=1pt,
    font=\scriptsize,
    xmin=0,
    xmax=31,
    grid=none,
]
\addplot[
    only marks,
    mark=*,
    color=black!55
]
coordinates {
(1, 10)
(2, 9)
(3, 8)
(4, 10)
(5, 7)
(6, 9)
(7, 6)
(8, 8)
(9, 10)
(10, 5)
(11, 7)
(12, 9)
(13, 4)
(14, 6)
(15, 8)
(16, 3)
(17, 5)
(18, 7)
(19, 1)
(20, 4)
(21, 6)
(22, 2)
(23, 3)
(24, 5)
(25, 1)
(26, 4)
(27, 2)
(28, 3)
(29, 1)
(30, 2)
};
\end{axis}
\end{tikzpicture}
\caption{\cref{alg: algorithm} on $u=abbacac$ (left) and $abcacba$ (right).}\label{fig: more examples}
\end{figure}
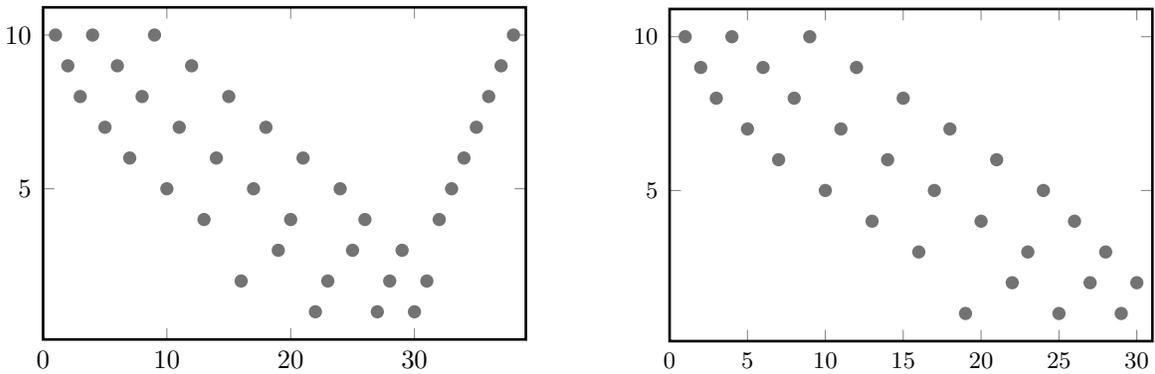

The fact that we always get a pattern prompts us to posit the following.
\begin{conj}\label{conj: algorithm}
    Let $s(n,u)$ be the sequence of produced by \cref{alg: algorithm}. Then the length of $s(n,u)$ grows at most linearly in $n$, for any $u$.
\end{conj}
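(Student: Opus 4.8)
The plan is to prove the stronger structural statement that the run of \cref{alg: algorithm} on input $(n,u)$ is \emph{eventually periodic}: after an initial segment whose length is bounded independently of $n$, the algorithm enters a loop that repeats a block of bounded length, exiting after $O(n)$ iterations into a final segment of bounded length. Since each iteration appends $O(1)$ letters, this gives $|s(n,u)| = O(n)$. Termination itself is immediate: every prefix of the run is $r$-sparse and $u$-free, hence of length at most $\Ex(n,u) < \infty$; the whole content of the conjecture is the \emph{linear} bound, which the trivial estimate $|s(n,u)| \le \Ex(n,u)$ does not supply, since $\Ex(n,u)$ is superlinear for sequences like $u = abcacbc$.

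First I would establish a \textbf{monotone frontier lemma}: if $p_t$ denotes the leftmost position of the current sequence at which some letter of $A$ can be properly inserted after $t$ steps, then $p_t$ is non-decreasing (once the index shift from each insertion is taken into account). This is because inserting a letter at a position $P$ can only enlarge the $r$-windows of nearby positions and can only create new copies of $u$, never the reverse; so no position $p < P$ that could previously accept no letter becomes able to. Consequently the algorithm sweeps left to right, and once a prefix is ``closed off'' it stays closed off for the remainder of the run.

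Next I would argue that the move made at step $t$ depends only on a \textbf{bounded context} near $p_t$: the $O(r)$ letters straddling the frontier, their order type under renaming, and a bounded summary of which renamed letters still have a legal insertion site somewhere to the right of the frontier. Given this, the sequence of moves is generated by a finite automaton and is therefore eventually periodic, with each period contributing $O(1)$ letters. To turn eventual periodicity into an $O(n)$ bound I would show that the loop cannot run for long without a ``bookkeeping event'' of a kind that can occur only $O(n)$ times in total --- for instance, using a letter of $A$ for the first time, or permanently removing a letter from the set of those still insertable somewhere (both sets vary monotonically, by the same argument as in the frontier lemma). The head and the tail have bounded length, or else are absorbed into this count.

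The hard part is the bounded-context claim. When $u$ has a repeated letter, a copy of $u$ can be spread across an arbitrarily long stretch of the sequence, so whether a given letter may be inserted near the frontier can in principle depend on letters arbitrarily far to the right, and the relevant context need not be bounded a priori. Overcoming this would require exploiting that the part of the sequence to the right of the frontier is itself the output of the same greedy process, hence rigidly structured, so that only boundedly much information about it can influence the next move. Proving this for every $u$ seems to need a genuinely new idea: the regular patterns in \cref{fig: abcacbc} and \cref{fig: more examples} strongly support \cref{conj: algorithm}, but it is not clear how to preclude pathological $u$ for which the effective context is unbounded --- which is why the statement is posed as a conjecture rather than a theorem.
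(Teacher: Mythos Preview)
The paper does not prove \cref{conj: algorithm}; it is posed as an open conjecture, and you correctly acknowledge this at the end of your sketch. So there is no proof in the paper to compare against. The paper does, however, offer a reformulation different from yours: because the algorithm always prefers the smallest insertable letter, $s(n-1,u)$ is produced in full before the letter $n$ is ever inserted, hence $s(n-1,u)$ is a subsequence of $s(n,u)$ and the conjecture is equivalent to $|s(n,u)|-|s(n-1,u)|=O(1)$. This incremental viewpoint is rather orthogonal to your left-to-right-sweep picture.

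There is also a concrete gap in your sketch: even granting that the frontier $p_t$ is non-decreasing, the algorithm does not in general insert at $p_t$. Its priority is \emph{smallest letter first}, then leftmost valid position for that letter --- not the globally leftmost open slot. So if letter $1$ can only go at position $50$ while letter $2$ fits at position $3$, the algorithm inserts $1$ at position $50$; the frontier stays at $3$, but the action is far to the right. Thus ``the algorithm sweeps left to right'' does not follow from the frontier lemma, and the bounded-context/finite-automaton picture built on ``context near $p_t$'' does not capture the algorithm's actual dynamics. A workable version would need per-letter frontiers, or the paper's reformulation (saturate on $[n-1]$ first, then bound the number of additional insertions the new letter $n$ triggers). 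As you yourself note, even after such a repair the bounded-context step is where the real difficulty lies.
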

There is one important point to note about this algorithm: since we prefer to insert smaller letters, in the process of generating $s(n,u)$, right before we add the first ``$n$'', the sequence must be exactly $s(n-1,u)$. Thus, $s(n-1,u)$ is a subsequence of $s(n,u)$. This means that \cref{conj: algorithm} is equivalent to saying that if we run the algorithm starting from $s(n-1,u)$, we will add a bounded number of new letters.

We now use this algorithm to prove the conjecture when $u$ is of the form $abcabc\dots$. Such sequences are the three-letter analog of the Davenport-Schinzel sequences. In the case when $u = (abc)^t$, for some integer $t$, the algorithm outputs a sequence of the form shown in \cref{fig: (abc)^t}.
\begin{figure}[ht]
\centering
\begin{tikzpicture}
\begin{axis}[
    width=12cm,
    height=8cm,
    line width=1pt,
    font=\footnotesize,
    xmin=0,
    xmax=75,
    grid=none,
]
\addplot[
    only marks,
    mark=*,
    color=black!55
]
coordinates {
(1, 1)
(2, 2)
(3, 10)
(4, 1)
(5, 2)
(6, 10)
(7, 1)
(8, 2)
(9, 10)
(10, 1)
(11, 2)
(12, 9)
(13, 1)
(14, 2)
(15, 9)
(16, 1)
(17, 2)
(18, 9)
(19, 1)
(20, 2)
(21, 8)
(22, 1)
(23, 2)
(24, 8)
(25, 1)
(26, 2)
(27, 8)
(28, 1)
(29, 2)
(30, 7)
(31, 1)
(32, 2)
(33, 7)
(34, 1)
(35, 2)
(36, 7)
(37, 1)
(38, 2)
(39, 6)
(40, 1)
(41, 2)
(42, 6)
(43, 1)
(44, 2)
(45, 6)
(46, 1)
(47, 2)
(48, 5)
(49, 1)
(50, 2)
(51, 5)
(52, 1)
(53, 2)
(54, 5)
(55, 1)
(56, 2)
(57, 4)
(58, 1)
(59, 2)
(60, 4)
(61, 1)
(62, 2)
(63, 4)
(64, 1)
(65, 2)
(66, 3)
(67, 1)
(68, 2)
(69, 3)
(70, 1)
(71, 2)
(72, 3)
(73, 1)
(74, 2)
};
\end{axis}
\end{tikzpicture}
\caption{\cref{alg: algorithm} on $u=(abc)^t$, $t=4$.}\label{fig: (abc)^t}
\end{figure}
This example is when $t=4,n=10$. In general, the numbers $3,\dots, n$ will each appear exactly $t-1$ times. The sequence obtained is thus
\[(12n)^{t-1}\dots (124)^{t-1}(123)^{t-1}12.\]
It is straightforward to check that the resulting sequence is $(abc)^t$-saturated. Note that this is a special case of Lemma 3.4 in \cite{ANAND2025382}.

Similarly, when $u=(abc)^ta$ we get the pattern in \cref{fig: (abc)^ta}.
\begin{figure}[ht]
\centering
\begin{tikzpicture}
\begin{axis}[
    width=12cm,
    height=8cm,
    line width=1pt,
    font=\footnotesize,
    xmin=0,
    xmax=83,
    grid=none,
]
\addplot[
    only marks,
    mark=*,
    color=black!55
]
coordinates {
(1, 10)
(2, 9)
(3, 8)
(4, 10)
(5, 9)
(6, 8)
(7, 10)
(8, 9)
(9, 8)
(10, 10)
(11, 9)
(12, 8)
(13, 7)
(14, 9)
(15, 8)
(16, 7)
(17, 9)
(18, 8)
(19, 7)
(20, 9)
(21, 8)
(22, 7)
(23, 6)
(24, 8)
(25, 7)
(26, 6)
(27, 8)
(28, 7)
(29, 6)
(30, 8)
(31, 7)
(32, 6)
(33, 5)
(34, 7)
(35, 6)
(36, 5)
(37, 7)
(38, 6)
(39, 5)
(40, 7)
(41, 6)
(42, 5)
(43, 4)
(44, 6)
(45, 5)
(46, 4)
(47, 6)
(48, 5)
(49, 4)
(50, 6)
(51, 5)
(52, 4)
(53, 3)
(54, 5)
(55, 4)
(56, 3)
(57, 5)
(58, 4)
(59, 3)
(60, 5)
(61, 4)
(62, 3)
(63, 1)
(64, 4)
(65, 3)
(66, 1)
(67, 4)
(68, 3)
(69, 1)
(70, 4)
(71, 3)
(72, 1)
(73, 2)
(74, 3)
(75, 1)
(76, 2)
(77, 3)
(78, 1)
(79, 2)
(80, 3)
(81, 1)
(82, 2)
};
\end{axis}
\end{tikzpicture}
\caption{\cref{alg: algorithm} on $u=(abc)^ta$, $t=4$.}\label{fig: (abc)^ta}
\end{figure}
Here $t=4,n=10$, and each number from $3$ to $n-2$ appears exactly $3(t-1)+1$ times. This sequence can be easily generalized to arbitrary values of $t$.
Finally, for $u=(abc)^tab$, we get the pattern in \cref{fig: (abc)^tab}, where $t=3$, $n=10$, and each number from $3$ to $n-2$ appears exactly $3(t-1)+2$ times.
\begin{figure}[ht]
\centering
\begin{tikzpicture}
\begin{axis}[
    width=12cm,
    height=8cm,
    line width=1pt,
    font=\footnotesize,
    xmin=0,
    xmax=67,
    grid=none,
]
\addplot[
    only marks,
    mark=*,
    color=black!55
]
coordinates {
(1, 9)
(2, 10)
(3, 8)
(4, 9)
(5, 10)
(6, 8)
(7, 9)
(8, 10)
(9, 8)
(10, 9)
(11, 7)
(12, 8)
(13, 9)
(14, 7)
(15, 8)
(16, 9)
(17, 7)
(18, 8)
(19, 6)
(20, 7)
(21, 8)
(22, 6)
(23, 7)
(24, 8)
(25, 6)
(26, 7)
(27, 5)
(28, 6)
(29, 7)
(30, 5)
(31, 6)
(32, 7)
(33, 5)
(34, 6)
(35, 4)
(36, 5)
(37, 6)
(38, 4)
(39, 5)
(40, 6)
(41, 4)
(42, 5)
(43, 3)
(44, 4)
(45, 5)
(46, 3)
(47, 4)
(48, 5)
(49, 3)
(50, 4)
(51, 2)
(52, 3)
(53, 4)
(54, 2)
(55, 3)
(56, 4)
(57, 2)
(58, 3)
(59, 1)
(60, 2)
(61, 3)
(62, 1)
(63, 2)
(64, 3)
(65, 1)
(66, 2)
};
\end{axis}
\end{tikzpicture}
\caption{\cref{alg: algorithm} on $u=(abc)^tab$, $t=3$.}\label{fig: (abc)^tab}
\end{figure}
As before, this sequence can be easily generalized to arbitrary values of $t$.

Proving that these constructions indeed give $u$-saturated sequences is straightforward but tedious. In summary, we have:
\begin{prop}\label{prop: generalized alternations}
    If $u$ is of the form $abcabc\dots$, $\Sat(u) = O(n)$.
\end{prop}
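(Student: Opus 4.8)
The plan is to split on the residue of $|u|$ modulo $3$: a sequence of the form $abcabc\cdots$ is isomorphic to $(abc)^t$, to $(abc)^t a$, or to $(abc)^t ab$ for some integer $t\ge 1$, according as $|u|\equiv 0,1,2\pmod{3}$. For each of the three families I would take the explicit construction already drawn in \cref{fig: (abc)^t}, \cref{fig: (abc)^ta}, and \cref{fig: (abc)^tab}, written as a word in $n$ with the fixed parameter $t$ --- for instance $(12n)^{t-1}(12(n-1))^{t-1}\cdots(123)^{t-1}12$ in the first case, and the analogous block words in the other two. For fixed $t$ each of these words has length linear in $n$, so the proposition reduces to showing that each word is $u$-saturated, after which $\Sat(n,u)=O(n)$ holds uniformly over all $u$ of the stated form.

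Verifying saturation means checking three properties of each construction. First, $3$-sparsity: an immediate inspection of the block pattern, since every window of three consecutive letters --- inside a block and across the junction of two adjacent blocks --- meets one letter $\ge 3$ and the two small letters at most once each. Second, $u$-freeness: for $(abc)^t$ this is clean, as every letter $k\ge 3$ occurs only $t-1<t$ times whereas a copy of $(abc)^t$ would force each of its three distinct letters to occur at least $t$ times, so all three would have to lie in $\{1,2\}$, which is impossible; for $(abc)^t a$ and $(abc)^t ab$ the middle letters occur $3(t-1)+1$ and $3(t-1)+2$ times, so a raw count no longer suffices and one instead uses the band structure of the word --- each letter occupies only one contiguous stretch, during which the larger and smaller letters available to pair with it are very limited, which rules out the required long alternations $x_1x_2x_3x_1x_2x_3\cdots$. (For the first family one may alternatively just quote Lemma 3.4 of \cite{ANAND2025382}.)

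Third --- and this is the bulk of the work --- one must show that inserting any letter $x\in[n]$ into any gap of the construction either destroys $3$-sparsity or creates a copy of $u$. I would organize this by the size of $x$. When $x\in\{1,2\}$, the small letters are packed at distance at most two throughout the word, so in almost every gap the inserted $x$ lands within a window of three containing another $x$ and $3$-sparsity fails; the finitely many remaining gaps are dispatched by hand. When $x\ge 3$, the letter $x$ already appears with its full multiplicity, laid out so that its occurrences together with the surrounding $1$'s and $2$'s realize an alternation that is exactly one symbol short of a copy of $u$; inserting the extra $x$ into any sparsity-respecting gap supplies the missing symbol and completes the forbidden pattern. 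I expect this last step to be the main obstacle: one must match the inserted symbol to the correct slot of the alternation separately for each of the three families and each relative location of the gap. The bookkeeping is entirely mechanical but lengthy, which is why the argument is best described as ``straightforward but tedious.'' Combining the three cases gives $\Sat(n,u)=O(n)$ for every $u$ of the form $abcabc\cdots$.
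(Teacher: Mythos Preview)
Your proposal is correct and follows essentially the same route as the paper: split on $|u|\bmod 3$, take the three explicit constructions displayed in \cref{fig: (abc)^t}, \cref{fig: (abc)^ta}, \cref{fig: (abc)^tab}, observe their lengths are linear in $n$, and then verify $u$-saturation by a ``straightforward but tedious'' case check. Your outline of that check (counting multiplicities for $u$-freeness, and the insertion case analysis for saturation) is more detailed than what the paper records, but the strategy is identical, including the remark that the $(abc)^t$ case is covered by Lemma~3.4 of \cite{ANAND2025382}.
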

This prove point (1) of \cref{thm: main}.
If the length of $u$ is at least $6$, this result also follows from \cref{thm: 3-letter sequences} below. Interestingly, the constructions above are quite similar (or the same) as the construction given in \cref{thm: 3-letter sequences}.

\subsection{Sequences of the form \texorpdfstring{$aa\dots bb$}{aa...bb}}
In this subsection, we prove a result that resolves \cref{conj: main} for a large class of sequences.

Consider a sequence $u$ on $r$ distinct letters. We define $s_r(u)$ to be the longest sequence of the form $a_1a_2\dots a_r a_1 a_2\dots$ that avoids $u$. It is straightforward to verify that $s_r(u)$ is $u$-saturated. Let $s_r^m(u)$ be a copy of $s_r(u)$ on the letters $m+1,m+2,\dots,m+r$. The sequence $s_r(u)$ was first defined in \cite{ANAND2025382} for two-letter sequences, and was a key component in the proof of the $O(n)$ bound for two-letter sequences $u$. 

We say that $u$ is \textbf{irreducible} if $u$ cannot be decomposed into subsequences $u=u_1u_2$ such that $u_1$ and $u_2$ have no letters in common.

\begin{thm}\label{thm: aa...bb}
If $u$ is irreducible and of the form $aa\dots bb$, then $\Sat(n,u)=O(n)$.
\end{thm}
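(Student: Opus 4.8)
The plan is to use \cref{alg: algorithm} and show that on input $u = aa\dots bb$ (with, say, $p$ copies of $a$ followed by $q$ copies of $b$, and $p,q \ge 1$), it terminates after producing a sequence of length $O(n)$. The key observation is that for $r=2$ the relevant copy of $u$ inside a sequence $s$ is governed entirely by \emph{blocks}: a maximal run of a single letter. Because $s$ must be $2$-sparse, no two adjacent letters are equal, so $s$ is automatically a word with no immediate repeats; the "$aa$" and "$bb$" parts of $u$ then force us to track how many times each letter appears in total between suitable occurrences of the other letter. Concretely, $s$ contains a copy of $u = a^p b^q$ iff there is a letter $x$ occurring at least $p$ times, then after its $p$-th occurrence a letter $y \ne x$ occurring at least $q$ times. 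So being $u$-free is a very rigid global condition, and I would first make this combinatorial characterization precise.

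Next I would analyze the greedy algorithm directly. It starts from $s = 1$ (since $r-1 = 1$) and always inserts the smallest possible letter in the leftmost possible position. I expect — and this is what the figures for related sequences suggest — that the output has the form $s_2^{c_1}(u)\, s_2^{c_2}(u)\cdots$ glued together, i.e. a bounded-length "alternating gadget" on each consecutive pair (or small block) of letters, where $s_2(u)$ is the longest alternation $abab\dots$ avoiding $u$ defined just before the theorem; that gadget has length depending only on $u$, not on $n$. The irreducibility hypothesis should be exactly what guarantees these gadgets can be chained: if $u = u_1 u_2$ with disjoint letter sets, then a long sequence built from blocks on $\{1,2\}$, then $\{2,3\}$, then $\{3,4\}$, \dots could accidentally realize $u_1$ on one letter-pair and $u_2$ on a later disjoint pair, creating a copy of $u$ with unbounded "reach"; irreducibility forbids that factorization and so localizes every potential copy of $u$ to a bounded window. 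I would prove: (i) the greedy output restricted to any window of $O(1)$ consecutive letters is forced, and (ii) once letters $1,\dots,k$ have been "saturated" by their gadget, inserting letter $k+1$ interacts only with letters $k-O(1),\dots,k$, so the total insertions are $O(n)$.

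The main obstacle I anticipate is step (ii): controlling the interaction between far-apart letters in the greedy run. Even though $u$ is irreducible, one must rule out a copy of $u$ whose $a$-occurrences sit among small letters near the left end and whose $b$-occurrences sit among large letters near the right end — a genuinely non-local copy. The resolution I would pursue is that the greedy algorithm's "leftmost, smallest" rule keeps the large letters clustered at the right in a short, controlled tail (as in \cref{fig: more examples}), so that after the $p$-th occurrence of any small letter there simply are not $q$ occurrences of a single other letter available; the $aa\dots bb$ shape and irreducibility together should force any copy of $u$ to live within a window whose size is bounded in terms of $|u|$ only. Making that window bound rigorous — probably by an induction on $n$ using the remark that $s(n-1,u)$ is a prefix-like subsequence of $s(n,u)$, so that passing from $n-1$ to $n$ adds only $O(1)$ letters — is the crux, and is where I would spend most of the effort. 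The remaining verifications (that the constructed sequence is $2$-sparse, $u$-free, and that no letter can be properly inserted) are the routine-but-tedious part and would be handled by the block characterization from the first step.
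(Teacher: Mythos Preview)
Your proposal rests on a misreading of the statement. ``Of the form $aa\dots bb$'' does not mean $u=a^{p}b^{q}$; it means only that $u$ begins with two equal letters and ends with two equal letters, while the middle of $u$ may involve all $r$ distinct letters. (Indeed, under your reading the hypothesis ``$u$ is irreducible'' would be vacuous: $a^{p}b^{q}$ with $a\neq b$ decomposes as $a^{p}\cdot b^{q}$ with disjoint letter sets, so it is reducible; and the two-letter case is already handled by \cite{ANAND2025382}.) So the entire analysis you sketch---the block characterization of copies of $a^{p}b^{q}$, the $2$-sparsity remarks, the induction via the greedy algorithm---is addressing the wrong class of sequences.

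Beyond the misreading, the strategy of analyzing \cref{alg: algorithm} directly is essentially an attempt at \cref{conj: algorithm}, which the paper leaves open even for this restricted class. The paper instead gives an explicit construction: it concatenates shifted copies $s_r^{0}(u)\,s_r^{r}(u)\cdots s_r^{r(m-1)}(u)$ of the gadget $s_r(u)$ on disjoint $r$-letter blocks. Irreducibility is used exactly once, to say that any putative copy of $u$ in the concatenation must live inside a single block (and hence inside a single $s_r(u)$, which avoids $u$). The $aa\dots bb$ shape is used in the saturation step: if you insert a letter outside its own block, then deleting the first letter of $u$ still leaves something starting with $a$, so one can locate a copy of $u'$ (and then of $u$) inside a single $s_r(u)$ prefixed by the inserted letter. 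Your ``bounded window'' intuition is morally the right picture, but it is realized by a direct construction rather than by controlling the greedy run.
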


\begin{proof}  
Let $r$ be the length of $u$. If $r = 2$, the result follows from \cite{ANAND2025382}, so assume $r \ge 3$. The idea is to create a sequence out of blocks that are isomorphic to $s_r(u)$ (which is known to be $u$-saturated). In the case where $n$ is not evenly divisible by $r$, we have to slightly modify the lengths of some of the blocks, but the general idea is the same.

\vspace{2mm}\noindent
\textsc{Case 1:} $n = rm$ for some integer $m$.\newline
Define the sequence $s_n$ as follows:
\[s_n = s_r^0(u) \, s_r^r(u) \dots s_r^{r(m-1)}(u),\]
We claim that $s_n$ is $u$-saturated.

\textit{Avoidance:} Partition $[n]$ into \textit{blocks} of size $r$: the $k$-th block is $\{rk+1, \dots, rk + r\}$. Suppose that $s_n$ contains a copy of $u$. By irreducibility of $u$, the letters in this copy all come from the same block. But within each block, the subsequence is $s_r(u)$, which avoids $u$. Hence, $s_n$ avoids $u$.

\textit{Saturation:} Suppose we insert the letter $a$ into $s_n$. Then $a$ belongs to some block $\{rk+1, \dots, rk+r\}$. If we insert $a$ inside the corresponding $s_r^{rk}(u)$, it is easy to see that we break $r$-sparsity. Now suppose we insert $a$ before or after $s_r^{rk}(u)$. Without loss of generality, we insert $a$ to the left of $s_r^0(u)$. Then by the same argument as below in Case 2, we induce a copy of $u$ in $a,s_r^0(u)$.

\vspace{2mm}\noindent
\textsc{Case 2:} ${n = rm + j}$, where ${1 \le j < r}$. \newline
We want to construct a $u$-saturated sequence of length $n$. Let $s_r(u) = (a_1 \dots a_r)^t a_1 \dots a_j$, and define:
\[s_r'(u) = (a_1 \dots a_{r+1})^t a_1 \dots a_j.\]
We claim $s_r'(u)$ is also $u$-saturated and behaves like $s_r(u)$ in the saturation argument above. In particular, it is $u$ saturated, and if we add any letter on the left or the right of $s_r'(u)$ (even if it violates sparsity) then we get a copy of $u$.

\textit{Avoidance:} This is clear, since $s_r(u)$ avoids $u$.

\textit{Saturation:} Note that the occurrences of the letter $a_i$ in $s_r'(u)$ are evenly spaced with exactly $r$ letters between them. Since $r\ge 3$, this means we cannot insert an $a_i$ between two already-existing $a_i$ in $s_r'(u)$ without violating $r$-sparsity. So it has to be inserted before the very first or after the very last $a_i$. Note that this is trivially true also when we are adding $a_i$ on the left or the right of $s_r'(u)$. We claim that as long as this condition is satisfied, we have a copy of $u$.
    
We can take a subsequence $s$ of $s_r'(u)$ containing $a_i$ that is isomorphic to $s_r(u)$ (we just delete all occurrences of one of the letters $a_k$, $k>j$). Then $a_i$ is inserted either before the first $a_i$ in $s$ or after the last. Without loss of generality, it is the first case, and $s$ uses the letters $a_1,\dots, a_{r}$. Then, let $u'$ be $u$ with the first letter deleted. Since the first two letters of $u$ are equal, it is clear that $s_r(u')$ is $s_r(u)$ with the first $r$ letters deleted. Letting $s = a_1\dots a_{i-1}s'$, it follows that $s'$ is isomorphic to a sequence of the form $b_1\dots b_r b_1\dots b_r\dots$ that is longer than $s_r(u')$, hence contains a copy of $u'$. Furthermore, this copy can start with the first letter of $s'$, which is $a_i$. By adding the $a_i$ which we inserted before this $a_i$, we get a copy of $u$. This proves that $s_r'(u)$ is $u$-saturated, and also if we add a letter on the left or the right of $s_r'(u)$, we get a copy of $u$.

Now, to reach length $n = rm + j$, replace $j$ of the blocks $s_r^{rk}(u)$ in $s_{rm}$ with $s_r'(u)$. The resulting sequence has length $n$ and remains $u$-saturated. Since we only use $O(m) = O(n)$ blocks of constant size, the total length grows linearly in $n$.
\end{proof}

This proves point (2) of \cref{thm: main}.

It is easy to see that the class of sequences in this theorem contains infinitely many nonlinear sequences. 

\subsection{Infinite Sequences}
We can also consider $u$-saturated \textit{infinite} sequences. In other words, let $s$ be an infinite sequence on some alphabet $A$ (which can be infinite). We say $s$ is $u$-saturated if $s$ is $r$-sparse (defined in the same way as in the finite-length case), $s$ does not contain a copy of $u$ as a subsequence, and inserting a new letter between two letters of $s$ either violates $r$-sparsity or induces a copy of $u$.

In the following theorem, we prove that for many sequences $u$, there exists a doubly infinite sequence $s$ on an infinite alphabet $A$ such that $s$ is $u$-saturated.

Say that $u$ is \textbf{strongly irreducible} if for any two letters $\alpha,\beta$ in $u$, it is not the case that all the $\alpha$'s occur before all the $\beta$'s or vice versa. Note that this implies the notion of irreducibility introduced earlier.
\begin{thm}\label{thm: infinite sequences}
    Let $u$ be a strongly irreducible sequence on $r$ letters that contains each of its letters more than once. Then there exists a doubly infinite sequence on the alphabet $\{a_1,\dots,a_{r-1}\}\cup \Z$ that is $u$-saturated.
\end{thm}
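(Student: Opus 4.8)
The plan is to build the doubly infinite sequence as a concatenation (indexed by $\Z$) of finite blocks, each block being an isomorphic copy of a single fixed finite sequence $w$ that is ``locally $u$-saturated,'' and to choose $w$ so that the blocks overlap in alphabet in a controlled way. Concretely, I would first construct, much as in \cref{thm: aa...bb}, a finite $r$-sparse sequence $w$ on letters $\{a_1,\dots,a_{r-1},x\}$ (with $x$ a ``fresh'' letter playing the role of the $r$-th letter) of the form $(a_1\cdots a_{r-1}x)^N$ (truncated appropriately) that is long enough that: (i) $w$ is $u$-free but any sequence of the same periodic shape that is even one period longer contains $u$; and (ii) inserting any letter before $w$ or after $w$ forces a copy of $u$. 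Property (ii) is exactly the kind of statement proved in the saturation argument of \cref{thm: aa...bb} and I expect the same mechanism (peel off the first or last letter of $u$, use strong irreducibility to see that the deleted letter still reappears, and recurse) to apply here, using that every letter of $u$ occurs more than once and that $u$ is strongly irreducible so no letter is confined to an end.

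Next I would lay out copies of $w$ along $\Z$: let $s = \cdots W_{-1}W_0W_1W_2\cdots$, where each $W_k$ is a copy of $w$ in which the $a_i$'s are kept as global letters $a_1,\dots,a_{r-1}$ but the role of the periodic ``$x$'' is played by a distinct integer $k\in\Z$ (or a short block of integers $kr+1,\dots,kr+r-?$ if I need each block to use fresh integers throughout — I would decide this once the avoidance argument below is pinned down). The point of recycling $a_1,\dots,a_{r-1}$ across all blocks while using disjoint integers per block is that a copy of $u$ would need to use at least two of the periodic symbols from two different blocks, but those are disjoint integers, so any copy of $u$ must lie inside a single block $W_k$; by construction $W_k\cong w$ avoids $u$. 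Here is where strong irreducibility does real work: it guarantees that within $w$ no single letter can be used ``for free'' at the boundary, so splicing two $u$-free blocks cannot create $u$ across the seam. I would verify $r$-sparsity of $s$ directly from the periodic shape of $w$ and the fact that consecutive blocks are written in the same cyclic order of the $a_i$'s.

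For saturation I would argue exactly as in \cref{thm: aa...bb}: suppose we insert a letter $c$ somewhere in $s$. If $c$ is one of the global letters $a_i$ and it lands strictly inside some block $W_k$, the even spacing ($r$ apart, $r\ge 3$) of the $a_i$'s inside $w$ means $r$-sparsity breaks, unless $c$ is inserted at the extreme left or right end of $W_k$ — but that position is immediately before the next block or after the previous block, so property (ii) of $w$ (inserting a letter adjacent to a copy of $w$ forces $u$) gives a copy of $u$. If $c$ is an integer, either it equals the integer used in some block (then it is confined to that block and sparsity fails just as in Case 1 of \cref{thm: aa...bb}), or it is a brand-new integer, in which case I need to have arranged that every gap of $s$ already sees a copy of $w$ on one side (which is automatic, since the gaps between and inside blocks are all flanked by a full copy of $w$), so again (ii) applies. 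I would package (ii) as a lemma up front so both the between-blocks and inside-block-boundary cases quote it verbatim.

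The main obstacle I anticipate is establishing property (ii) — that inserting an arbitrary letter immediately before or after the finite block $w$ is guaranteed to create a copy of $u$, uniformly over all letters that could be inserted, including letters not appearing in $w$ at all. In \cref{thm: aa...bb} this was eased because $u$ began (and ended) with a repeated letter $aa$, which let the induction ``$u\mapsto u'$ with first letter deleted'' go through cleanly; for a general strongly irreducible $u$ the first and last letters need not be repeated adjacent to themselves, so I would instead induct on $u$ by removing the first letter $\alpha$: strong irreducibility says some occurrence of $\alpha$ lies after some occurrence of every other letter, hence after deleting the leading $\alpha$ the remaining word $u'$ is still strongly irreducible (or at least still embeds into the relevant periodic tail), and a sufficiently long periodic block of the right shape contains $u'$ starting at any prescribed symbol; then prepending the inserted letter and matching it to the leading $\alpha$ completes the copy of $u$. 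Making ``sufficiently long'' precise — i.e., choosing $N$ in $w=(a_1\cdots a_{r-1}x)^N(\cdots)$ large enough that all these embeddings exist simultaneously while $w$ itself stays $u$-free — is the delicate bookkeeping step, but it is a finite computation depending only on $|u|$ and $r$, not on anything infinite, so it does not threaten the construction.
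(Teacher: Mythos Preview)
Your block construction---shared letters $a_1,\dots,a_{r-1}$ and one fresh integer per block, indexed by $\Z$---is the right shape, but the avoidance argument has a genuine gap. You write that a putative copy of $u$ ``must lie inside a single block $W_k$''; that is false. Strong irreducibility only forces such a copy to use at most one integer $k$; the $a_i$'s it uses may come from \emph{anywhere} in $s$, not just from $W_k$. So what you must verify is that the full subsequence of $s$ on the alphabet $\{a_1,\dots,a_{r-1},k\}$ avoids $u$, and that subsequence is not the finite $w=(a_1\cdots a_{r-1}x)^N$ but the doubly infinite
\[
\dots(a_1\cdots a_{r-1})(a_1\cdots a_{r-1}k)^N(a_1\cdots a_{r-1})\dots.
\]
Choosing $N$ maximal so that the \emph{finite} $w$ avoids $u$ does not make this infinite sequence $u$-free. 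Concretely, take $r=3$ and $u$ isomorphic to $123132$ (strongly irreducible, every letter twice): then $(a_1a_2x)^2$ avoids $u$, so your $N=2$, yet $\dots a_1a_2\,a_1a_2x\,a_1a_2x\,a_1a_2\dots$ already contains a copy of $u$ (match $1,2,3\mapsto a_1,a_2,x$ and borrow the final $a_2$ from the right tail). Your $s$ would therefore contain $u$.

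The paper fixes exactly this by calibrating the block length against the doubly infinite object from the outset: set
\[
t_r(k)=\dots(a_1\cdots a_{r-1})(a_1\cdots a_{r-1}x)^k(a_1\cdots a_{r-1})\dots,
\]
take $p$ maximal with $t_r(p)$ avoiding $u$ (so $p\le N$, possibly strictly), and let $t[n]=(a_1\cdots a_{r-1}n)^p$, $s=\cdots t[-1]\,t[0]\,t[1]\cdots$. With this single change your ``main obstacle'' (property~(ii)) and the induction on letters of $u$ disappear entirely: the $a_i$'s are spaced exactly $r$ apart throughout all of $s$, so inserting any $a_i$ \emph{anywhere}---including at block boundaries---already breaks $r$-sparsity; every integer in $\Z$ is used, so there is no ``brand-new integer'' case; and inserting one more copy of some $n\in\Z$ makes the restriction of $s$ to $\{a_1,\dots,a_{r-1},n\}$ contain $t_r(p+1)$ as a subsequence, which by maximality of $p$ contains $u$.
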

\begin{proof}
    The idea is similar to the proof of \cref{thm: aa...bb}. Define $t_r(k)$ to be the infinite sequence over the alphabet $\{a_1,\dots, a_{r-1},x\}$ given by
    \[
    \dots (a_1\dots a_{r-1})(a_1\dots a_{r-1})(a_1\dots a_{r-1}x)^k (a_1\dots a_{r-1})(a_1\dots a_{r-1})\dots .
    \]
    Since $u$ contains each of its letters at least twice, the sequence $t_r(1)$ avoids $u$. Moreover, for sufficiently large $k$, it is clear that $t_r(k)$ contains a copy of $u$. Let $p$ be the maximum integer such that $t_r(p)$ avoids $u$. Note that $p\geq 1$.  
    
    For $n\in \mathbb Z$, define
    \[
    t[n] = (a_1\dots a_{r-1}n)^p.
    \]
    We then set
    \[
    s = \dots t[-1]t[0]t[1]t[2]\ldots = \bigcup_{n=-\infty}^\infty t[n].
    \]
    We claim that $s$ is $u$-saturated. It is clearly $r$-sparse.
    
    \smallskip \noindent\textit{Avoidance.} Suppose $s$ contains a copy of $u$. Since $u$ is strongly irreducible, such a copy can involve at most one letter from $\mathbb Z$. Thus it must lie on the alphabet $\{a_1,\dots, a_{r-1},n\}$ for some $n\in\mathbb Z$. But the subsequence of $s$ on these letters is isomorphic to $t_r(p)$, which by definition avoids $u$. Hence $s$ avoids $u$.
    
    \smallskip \noindent\textit{Saturation.} We cannot insert any $a_i$ without breaking $r$-sparsity. If we insert an integer $n$ without violating $r$-sparsity, then $s$ acquires a subsequence isomorphic to $t_r(p+1)$ over the alphabet $\{a_1,\dots, a_{r-1},n\}$. Since $p$ was chosen maximally, $t_r(p+1)$ contains a copy of $u$. It follows that $s$ is $u$-saturated.
\end{proof}

\begin{rem}
    Consider the finite analogue of the sequence $s$, namely
    \[
    t[1]t[2]\dots t[n].
    \]
    The same argument shows that inserting a letter between $t[k]$ and $t[n-k]$ necessarily violates either $r$-sparsity or $u$-avoidance, where $k$ is a constant depending only on $u$. Thus this construction yields a sequence that is ``almost $u$-saturated.'' Moreover, if we run \cref{alg: algorithm} starting from this sequence, the contiguous block 
    \[
    t[k]t[k+1]\dots t[n-k]
    \] 
    is preserved. It is natural to ask whether this additional structure can be exploited to prove that the algorithm terminates in $O(n)$ steps, thereby resolving \cref{conj: main} (and a version of \cref{conj: algorithm}) for the class of sequences considered in \cref{thm: infinite sequences}.
\end{rem}

\section{Saturation for Sequences on 3 letters}\label{sec: 3-letter sequences}
In \cite{ANAND2025382}, it was shown that every sequence on two letters has a linear saturation function. The proof provided an explicit construction of a $u$-saturated sequence on $n$ letters for arbitrary $n$, relying heavily on the sequence $s_2(u)$. This construction extends naturally to an arbitrary sequence $u$ on $r$ letters. The same argument shows that the resulting sequence is \textbf{$u$-semisaturated}, i.e., saturated except that it need not avoid $u$. However, when $u$ involves more than two letters, the sequence obtained from this construction may fail to avoid $u$.  

In this section, we restrict attention to the case where $u$ has three letters, and prove that under certain conditions the construction of \cite{ANAND2025382} indeed yields a $u$-saturated sequence. Consequently, we obtain $\Sat(n,u)=O(n)$ in this setting.  

Throughout, let $u$ be a fixed three-letter sequence of length greater than or equal to six of the form
\[
u = abc\dots xyz,
\]
where $a,b,c$ and $x,y,z$ are distinct letters, and $x,y,z$ is a permutation of $\{a,b,c\}$. Define a family of sequences $s[n]$ on $n$ letters recursively as follows. Set $s[3]=s_3(u)$. Given $s[n-1]$, let $x,y$ denote its last two letters, and let $z$ be a new letter not appearing in $s[n-1]$. Construct $s[n]$ by appending to $s[n-1]$ a copy of $s_3(u)$ on the letters $x,y,z$, omitting its initial two letters $x,y$. In particular, the terminal segment of $s[n]$ is a full copy of $s_3(u)$ on $x,y,z$.  

This definition of $s[n]$ is directly analogous to the construction of the $u$-saturated sequence in Theorem~3.10 of \cite{ANAND2025382}.

\begin{prop}
    The sequence $s[n]$ is $u$-semisaturated.
\end{prop}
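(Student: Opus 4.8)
# Proof Proposal

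The plan is to show that $s[n]$ is $r$-sparse (here $r=3$) and that inserting any letter either violates $3$-sparsity or creates a copy of $u$; avoidance of $u$ is not required for semisaturation, so we skip it. I would proceed by induction on $n$, mirroring the structure of the recursive definition. The base case $s[3]=s_3(u)$ is handled by the fact, stated in the excerpt, that $s_r(u)$ is $u$-saturated (hence in particular $u$-semisaturated and $3$-sparse). For the inductive step, I would write $s[n]=s[n-1]\cdot w$, where $w$ is the copy of $s_3(u)$ on letters $x,y,z$ with its first two letters $x,y$ deleted, so that the last two letters of $s[n-1]$ are exactly $x,y$ and the terminal segment of $s[n]$ (reading the last two letters of $s[n-1]$ together with $w$) is a full copy of $s_3(u)$ on $x,y,z$.

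First I would verify $3$-sparsity: $s[n-1]$ is $3$-sparse by induction, the terminal full copy of $s_3(u)$ is $3$-sparse since $s_3(u)$ is, and $z$ is a brand new letter that cannot coincide with anything in $s[n-1]$, so the only place sparsity could fail is at the junction — but since the last two letters of $s[n-1]$ are $x,y$ and $w$ begins (after the deleted $xy$) with the third letter of $s_3(u)$, which is $z$ (as $s_3(u)=xyz\dots$ up to isomorphism, its first three letters being distinct), the window straddling the junction reads $x,y,z$, which is fine. Then I would handle insertions by location. If a letter $\ell$ is inserted at a position lying strictly inside $s[n-1]$ (not touching its last letter), then since $s[n-1]$ is $u$-semisaturated, this insertion already forces a violation of $3$-sparsity or a copy of $u$ within $s[n-1]$, which persists in $s[n]$. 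Symmetrically, insertions strictly inside the terminal copy of $s_3(u)$ on $x,y,z$ are handled because $s_3(u)$ is $u$-saturated: any letter from $\{x,y,z\}$ inserted there breaks sparsity or makes a copy of $u$ on those three letters, and any letter not in $\{x,y,z\}$ — in particular any "new" letter relative to this block — when inserted into a block isomorphic to $s_3(u)=(a_1a_2a_3)^t a_1a_2\cdots$ breaks $3$-sparsity because the occurrences of each letter are $3$-periodic (using the $r\ge 3$ periodicity argument exactly as in the proof of \cref{thm: aa...bb}), unless it is inserted at the very ends of the block, which is the junction case below.

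The remaining cases are insertions at the two ends of $s[n]$ and at the single junction between $s[n-1]$ and $w$. Inserting before $s[n]$: this is an insertion at the left end of $s[n-1]$, and since $s[n-1]$ is semisaturated this already forces sparsity failure or a copy of $u$. Inserting after $s[n]$, i.e., after the terminal $s_3(u)$-block: any letter $\ell$ here — if $\ell\in\{x,y,z\}$ it collides within $3$ positions of its last occurrence in the block (the block ends with something like $\dots a_1 a_2$, and $s_3(u)$ being the longest such alternation avoiding $u$, appending the next alternating letter would create $u$, while appending a repeat breaks sparsity); if $\ell\notin\{x,y,z\}$, then $\ell,x,y$ or a window of the last three letters with $\ell$ — here I need that appending any genuinely new letter to $s_3(u)$ creates a copy of $u$ or violates sparsity, which is precisely part of what makes $s_3(u)$ saturated. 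Finally, the junction: inserting $\ell$ between the end of $s[n-1]$ and the start of $w$. The last two letters of $s[n-1]$ are $x,y$; to the right we have $z$ then more of the $s_3(u)$-block. If $\ell\in\{x,y,z\}$, sparsity fails against the window $x,y,z$. If $\ell$ is a letter appearing in $s[n-1]$ but not in $\{x,y,z\}$: this is an insertion at (essentially) the right end of $s[n-1]$, handled by semisaturation of $s[n-1]$ — but I must be careful, since "the right end" now has $w$ after it; the key point is that inserting $\ell$ immediately before the final letter $y$ of $s[n-1]$, or immediately after it, is covered by the semisaturation of $s[n-1]$ applied at or near its right boundary, because the copy of $u$ or the sparsity violation it produces lives in $s[n-1]\cup\{\ell\}$ and does not need $w$.

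The step I expect to be the main obstacle is the careful bookkeeping at the junction and right end, specifically making sure that "inserting at the right end of $s[n-1]$" and "inserting at the left end of the terminal block" are genuinely reduced to the already-established semisaturation of $s[n-1]$ and saturation of $s_3(u)$, respectively, without double-counting or leaving a gap for a letter that is new to the block but old to $s[n-1]$. This requires invoking the precise statement of Theorem~3.10 of \cite{ANAND2025382} — which guarantees not merely that $s[n-1]$ is semisaturated but that insertions at its extreme ends behave as claimed — and matching it cleanly with the $3$-periodicity argument from the proof of \cref{thm: aa...bb}. Once that interface is nailed down, the rest is the routine case analysis sketched above.
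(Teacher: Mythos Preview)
Your inductive skeleton matches the paper's one-line deferral to \cite{ANAND2025382}, but the case analysis contains a genuine error. You claim that inserting a letter $\ell\notin\{x,y,z\}$ into the interior of the terminal $s_3(u)$-block breaks $3$-sparsity ``because the occurrences of each letter are $3$-periodic.'' That periodicity argument from the proof of \cref{thm: aa...bb} applies only to a letter that \emph{already occurs} in the block: it says you cannot squeeze another $a_i$ between two existing $a_i$'s. A letter $\ell$ foreign to the block has no existing occurrences there, and indeed $x\,y\,z\,\ell\,x\,y\,z\cdots$ is perfectly $3$-sparse. The same confusion reappears at the right end of $s[n]$, where you invoke saturation of $s_3(u)$ to handle $\ell\notin\{x,y,z\}$; but $s_3(u)$ is saturated only on the three-letter alphabet $\{x,y,z\}$, so it says nothing about appending a fourth letter. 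Dually, when you insert $\ell=z$ strictly inside $s[n-1]$, the inductive hypothesis does not apply: $s[n-1]$ is semisaturated on the alphabet $[n-1]$, and $z=n$ is not in that alphabet.

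All three gaps are repaired by the same observation you already use at the junction, applied more broadly. If $\ell\notin\{x,y,z\}$ then $\ell\in[n-1]\setminus\{x,y\}$, and by semisaturation of $s[n-1]$ the sequence $s[n-1]\,\ell$ contains a copy of $u$; since every letter of $s[n-1]$ still precedes the inserted $\ell$ whenever $\ell$ is placed anywhere to the right of $s[n-1]$ (interior of the terminal block, the junction, or the far right end), that same copy persists. Symmetrically, if $\ell=z$ is inserted anywhere to the left of the terminal block, then the inserted $z$ followed by the terminal block yields a subsequence $z\,x\,y\,z\,x\,y\,z\cdots$ of length $|s_3(u)|+1$, which by saturation of $s_3(u)$ on $\{x,y,z\}$ contains a copy of $u$. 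Once you route the ``wrong-alphabet'' insertions this way, the rest of your argument goes through.
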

\begin{proof}
    The proof is analogous to the last part of the proof of Theorem 3.10 in \cite{ANAND2025382}.
\end{proof}
Thus, it remains to determine when $s[n]$ avoids $u$.

We begin by analyzing the length of $s_3 := s_3(u)$. By definition, $s_3 = 123123\dots$. If $s_3'$ denotes $s_3$ with one new letter appended at the end (in a way that preserves $3$-sparsity), then $s_3'$ contains a copy of $u$. Moreover, this copy necessarily uses both the first and last letters of $s_3'$. Hence $a=1$ and $(b,c)=(2,3)$ or $(3,2)$ in this copy.

To compute $|s_3'|$, imagine first writing the copy of $u$, then inserting letters from $\{1,2,3\}$ between consecutive letters so that the resulting sequence has the form $123123\dots$. The number of insertions depends on the consecutive pair:
\begin{itemize}
    \item $0$ insertions for $12,23,31$,
    \item $1$ insertion for $13,21,32$,
    \item $2$ insertions for $11,22,33$.
\end{itemize}

Define
\begin{align*}
    f_0(u) &= \#\{\,\text{consecutive pairs of the form } ab,bc,ca\text{ in $u$}\}, \\
    f_1(u) &= \#\{\,\text{consecutive pairs of the form } ac,ba,cb\text{ in $u$}\}, \\
    f_2(u) &= \#\{\,\text{consecutive equal-letter pairs in } u\,\}.
\end{align*}
Clearly,
\[
f_0(u) + f_1(u) + f_2(u) = |u|-1.
\]
If the copy of $u$ in $s_3'$ uses the letters $1,2,3$ in this order, then
\[
|s_3'| = |u| + f_1(u) + 2f_2(u),
\]
while if it uses $1,3,2$, then
\[
|s_3'| = |u| + f_0(u) + 2f_2(u).
\]
Therefore,
\begin{equation}\label{eqn: length of s_3}
    |s_3| = |u| - 1 + f_2(u) + \min\{f_0(u),f_1(u)\}.
\end{equation}

\begin{thm}\label{thm: 3-letter sequences}
    Let $u=abc\dots xyz$ be a three-letter sequence with $a,b,c$ distinct. Suppose 
    \[
    xyz \in \{abc, bca, cab\}, \qquad f_0(u)\ge f_1(u)+5.
    \]
    Then $s[n]$ avoids $u$, and hence $\Sat(n,u)=O(n)$.
\end{thm}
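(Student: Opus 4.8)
The plan is to show that $s[n]$ avoids $u$ by induction on $n$, with the base case $s[3]=s_3(u)$ holding by definition. For the inductive step, assume $s[n-1]$ avoids $u$ and suppose for contradiction that $s[n]$ contains a copy of $u$. Since $s[n]$ is obtained from $s[n-1]$ by appending a copy of $s_3(u)$ on letters $x,y,z$ (with its first two letters $x,y$ omitted), and since $u$ has exactly three letters, any copy of $u$ in $s[n]$ uses at most three distinct letters. If the copy avoids the new letter $z$ entirely, it lies in $s[n-1]$, contradicting the inductive hypothesis. So the copy must use $z$, hence its three letters are exactly $\{x,y,z\}$, and the copy lives entirely in the suffix of $s[n]$ consisting of all occurrences of $x$, $y$, and $z$ after the point where the appended block begins — more precisely, I would track the subsequence of $s[n]$ on the letters $\{x,y,z\}$ and argue the copy must be confined to a controlled window near the end.

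The heart of the argument is a length computation. The appended block is a copy of $s_3(u)$ on $x,y,z$ with its first two letters deleted, so it has length $|s_3(u)|-2$. But $x$ and $y$ already occur in $s[n-1]$ right before the block, and these occurrences can be prepended to extend the relevant $\{x,y,z\}$-subsequence. The key point is the hypothesis $f_0(u)\ge f_1(u)+5$ together with the formula $|s_3(u)| = |u| - 1 + f_2(u) + \min\{f_0(u),f_1(u)\} = |u|-1+f_2(u)+f_1(u)$. I would compare this against the length that a copy of $u$ using the letters $x,y,z$ \emph{in the wrong cyclic orientation} would require: by the insertion analysis preceding the theorem, a copy of $u$ read off an alternating block $xyz\,xyz\cdots$ needs length $|u| + f_1(u) + 2f_2(u)$ in one orientation and $|u|+f_0(u)+2f_2(u)$ in the other. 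The appended block, being $s_3(u)$ on $x,y,z$ truncated, realizes the \emph{shorter} orientation, so a copy of $u$ fitting inside it must use the orientation with the $+f_1(u)$ cost; since $f_0(u)\ge f_1(u)+5$, the block is too short by a margin of at least $5-2=3$ (after accounting for the two prepended letters $x,y$ and a couple of boundary adjustments) to contain a copy in the \emph{other} orientation. The condition $xyz\in\{abc,bca,cab\}$ guarantees that the orientation forced at the end of $s[n]$ is cyclically compatible with the orientation at the start, so no copy straddling the $s[n-1]$/block boundary can splice together a full $u$ either; this straddling case is the one requiring the most care, and I would handle it by showing any such copy would have to contain, within $s[n-1]$, a copy of $u'$ (=$u$ with a letter removed) plus enough of the block, forcing a copy of $u$ already in $s[n-1]$.

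I expect the main obstacle to be precisely this boundary analysis: ruling out a copy of $u$ whose early letters lie in $s[n-1]$ and whose later letters lie in the appended block. The clean internal case reduces to the length inequality, but the straddling case requires understanding how a prefix of $u$ can be matched inside $s[n-1]$ — which by induction already avoids full $u$ but may well contain long proper prefixes of it. The slack of $5$ in the hypothesis $f_0(u)\ge f_1(u)+5$ (rather than, say, $f_0(u)>f_1(u)$) is exactly what should absorb the small constant losses from the two deleted letters, the two reusable letters $x,y$ from $s[n-1]$, and the $\pm 1$ slop in aligning prefixes across the boundary; I would aim to make each of these losses explicit and check they sum to less than $5$. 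Once the contradiction is established, $s[n]$ avoids $u$ for all $n$, and since $|s[n]| = |s[n-1]| + |s_3(u)| - 2$ with $|s_3(u)|$ a constant depending only on $u$, we get $|s[n]| = O(n)$, and combined with the preceding proposition that $s[n]$ is $u$-semisaturated, $s[n]$ is $u$-saturated, giving $\Sat(n,u)=O(n)$.
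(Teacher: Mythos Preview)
Your induction framework and the idea of comparing orientations via the $f_0,f_1,f_2$ length formula are exactly right, but there is a genuine gap. The claim ``if the copy avoids the new letter $z$ entirely, it lies in $s[n-1]$'' is false: the appended block is built on the three letters $x,y,z$, so it contains many occurrences of $x$ and $y$, not just $z$. A copy of $u$ on a letter set $\{\alpha,\beta,\gamma\}$ with $\gamma\neq z$ can still reach into the appended block through those $x$'s and $y$'s, so the inductive hypothesis does not dispose of this case. Consequently your reduction to the single case ``the letters are $\{x,y,z\}$'' is unjustified, and the bulk of the argument is missing.

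What actually has to be done is this: if the copy does not use $z$, let $\gamma$ be the last of its three letters to be introduced, say at stage $k<n$. Strong irreducibility then forces the other two letters to be the final two letters of $s[k-1]$, and the full $\{\alpha,\beta,\gamma\}$-subsequence of $s[n]$ has the shape $P\,\alpha\beta\gamma\alpha\beta\gamma\cdots\,Q$, where $P$ is an $\{\alpha,\beta\}$-tail from $s[k-1]$ and $Q$ is a tail from the block at stage $k{+}1$ consisting of the last two letters of the middle $s_3$-block. You then have to rule out a copy of $u$ in each of a handful of concrete sequences of length $|s_3|$ obtained by varying the two boundary letters on each side. The hypothesis $xyz\in\{abc,bca,cab\}$ is not about ``cyclic compatibility of orientations'' in the sense you describe; it is what guarantees $f_i(\widetilde u)=f_i(u)$ for the reversal $\widetilde u$, so that the right-end boundary (the $Q$ part) can be handled by the mirror of the left-end argument. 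The constant $5$ is needed precisely for the worst subcase in which \emph{both} endpoints of the candidate sequence are perturbed, costing $2+2$ against the $f_0-f_1$ gap; your accounting (two deleted letters, two reusable letters, $\pm1$ slop) does not match where the losses actually occur.
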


\begin{proof}
We use a similar argument to that of Theorem 3.10 in \cite{ANAND2025382}, but the 3-letter case is much more complicated (which is also why we need to impose many more conditions). The proof proceeds by induction and a careful case analysis of where a hypothetical copy of $u$ could appear. The parameters $f_0(u)$ and $f_1(u)$ control how much space such a copy would need to take.

We proceed by induction on $n$. The base case holds since $s[3]=s_3$ is $u$-saturated, and therefore avoids $u$. Assume inductively that $s[n-1]$ avoids $u$.

Suppose for contradiction that $s[n]$ contains a copy of $u$. We distinguish two cases.

\smallskip
\noindent
\textsc{Case 1:} One of the letters is $z$.  
It is clear from the conditions that $u$ is strongly irreducible, so the other two letters must be $x$ and $y$, as all other letters appear strictly before the first $z$. The subsequence of $s[3]$ consisting of $x,y$ is of the form
\[
Pxyzxyz\dots,
\]
where the part after $P$ is isomorphic to $s_3$ and $P$ contains only $x,y$. Because the first three letters of $u$ are distinct, any copy of $u$ in this subsequence can use at most two letters before the first $z$, and these must be distinct. Thus it suffices to consider the cases $P=y$:
\[
xyzxyz\dots \quad \text{and} \quad yxzxyz\dots,
\]
with both sequences of length $|s_3|$. The first is isomorphic to $s_3$, hence avoids $u$. In the second, if a copy of $u$ exists, it must use the initial $yx$; otherwise, the same copy could be embedded in the first sequence, a contradiction. Thus the copy is on the alphabet $\{y,x,z\}$, and its length is at least
\[
|u| + f_0(u) + 2f_2(u) - 2,
\]
where we subtract $2$ because no insertions are needed between the initial $yx$ and the following $xz$. Since $f_0(u)\ge f_1(u)+3$, this contradicts \eqref{eqn: length of s_3}.

\smallskip
\noindent
\textsc{Case 2:} None of the letters is $z$.  
Let the letters be $\alpha,\beta,\gamma$ in the order of their first appearance in $s[3]$. The first $\gamma$ appears when extending $s[k-1]$ to $s[k]$ for some $k<n$. Since $u$ is strongly irreducible, some $\alpha$ must appear after some $\gamma$, which is only possible if $\alpha$ is among the last two letters of $s[k-1]$. Similarly, $\beta$ must also be among the last two letters. Without loss of generality, let $\beta$ be the last letter and $\alpha$ the second-last (if $k-1=0$, then $\alpha,\beta,\gamma$ are simply the first three letters of $s[n]$, and the same argument applies). Thus there is a copy of $u$ in
\[
P\alpha\beta\gamma\alpha\beta\gamma\dots Q,
\]
where $P$ contains only $\alpha,\beta$, the middle block has length $|s_3|$, and $Q$ contains only the last two letters of the middle block. We now consider three subcases, depending on the ending of the middle block.

\smallskip
\noindent
\textsc{Case 2.1:} The middle block ends $\alpha\beta\gamma$.  
We must check the following sequences of length $|s_3|$:
\[
\alpha\beta\gamma\dots \alpha\beta\gamma,\quad 
\beta\alpha\gamma\dots \alpha\beta\gamma,\quad 
\alpha\beta\gamma\dots \alpha\gamma\beta,\quad 
\beta\alpha\gamma\dots \alpha\gamma\beta.
\]
The first two follow from Case~1. For the latter two, let $\widetilde{u}$ be the reversal of $u$. Since $xyz\in\{abc,bca,cab\}$, $f_i(\widetilde{u})=f_i(u)$, so $\widetilde{u}$ satisfies the theorem’s hypotheses (note that we must rename the letters $a,b,c$ when computing $f_i(\widetilde{u})$). The reversal of the third sequence is isomorphic to the second sequence of Case~1 for $\widetilde{u}$, hence avoids $\widetilde{u}$, so the third sequence avoids $u$. For the fourth, any copy of $u$ must use both the initial $\beta\alpha$ and the terminal $\gamma\beta$; otherwise it embeds in one of the previous sequences. Thus its length is at least
\[
|u| + f_0(u) + 2f_2(u) - 4.
\]
Since this is $\le |s_3|$ yet $f_0(u)\ge f_1(u)+5$, we again contradict \eqref{eqn: length of s_3}.

\smallskip
\noindent
\textsc{Case 2.2:} The middle block ends $\beta\gamma\alpha$.  
Here the relevant sequences are
\[
\alpha\beta\gamma\dots \alpha\beta\gamma\alpha,\quad
\beta\alpha\gamma\dots \alpha\beta\gamma\alpha,\quad
\alpha\beta\gamma\dots \alpha\beta\alpha\gamma,\quad
\beta\alpha\gamma\dots \alpha\beta\alpha\gamma,
\]
each of length $|s_3|$. The proof is exactly analogous to Case~2.1.

\smallskip
\noindent
\textsc{Case 2.3:} The middle block ends $\gamma\alpha\beta$.  
Here the sequences are
\[
\alpha\beta\gamma\dots \alpha\beta\gamma\alpha\beta,\quad
\beta\alpha\gamma\dots \alpha\beta\gamma\alpha\beta,\quad
\alpha\beta\gamma\dots \alpha\beta\gamma\beta\alpha,\quad
\beta\alpha\gamma\dots \alpha\beta\gamma\beta\alpha,
\]
each of length $|s_3|$. The argument is identical to the previous subcases.

\smallskip

In all cases we obtain a contradiction, so $s[n]$ avoids $u$. Since $|s[n]|$ grows linearly in $n$, it follows that $\Sat(n,u)=O(n)$.
\end{proof}
This proves point (3) of \cref{thm: main}.

\begin{exam}
Some examples of sequences $u$ satisfying the conditions of \cref{thm: 3-letter sequences} are
\[abcacbcabca, abcbabcabc,abcbcbacbabcabcabc.\]
Additionally, all sequences of the form $(abc)^t$, $(abc)^ta$, $(abc)^tab$ satisfy the conditions as long as $t\ge 2$. Thus, \cref{thm: 3-letter sequences} implies \cref{prop: generalized alternations}. Finally, note that for any sequence $u$ on letters $a,b,c$, $(abc)u(abc)^t$ satisfies the conditions for large enough $t$. Thus $\Sat(n,(abc)u(abc)^t)=O(n)$ for large enough $t$.
\end{exam}
\section{A Linear Program for \texorpdfstring{$\Sat(n,u)$}{Sat(n,u)}}\label{sec: linear program}

In this section, we describe a linear program that computes the exact value of $\Sat(n,u)$. This is similar to the linear program found in \cite{brahms01matrices}, but it is significantly more complicated. Let $u$ be a sequence of length $\ell$ with $r$ distinct letters. Call an $r \times \ell$ 0-1 matrix $P$ a $u$-pattern if $P$ has a single 1 per column, and the sequence obtained by listing the row indices of the ones from left to right is isomorphic to $u$. Call a $r\times (\ell-1)$ 0-1 matrix $Q$ a $u^+$-pattern if every column has exactly one $1$, except for a single column $c$ which has two ones; and the sequence obtained by listing the row indices of the ones from left to right, where the one in $c$ with a higher row index is placed first, is isomorphic to $u$. Define a $u^-$-pattern similarly, except the one in $c$ with a lower row index is placed first. For example, if $u=abcac$ then we have the following examples:\\

\begin{center}
    \begin{minipage}{5cm}
        \[\begin{pmatrix}
            & & \bullet& &\bullet\\
            & \bullet& & &\\
            \bullet & & &\bullet& 
        \end{pmatrix}\]

        \vspace{5mm}
        \centering $u$-pattern
    \end{minipage}
    \begin{minipage}{5cm}
        \[\begin{pmatrix}
            & \bullet& &\bullet\\
            & \bullet& & \\
            \bullet & & \bullet& 
        \end{pmatrix}\]

        \vspace{5mm}
        \centering $u^+$-pattern
    \end{minipage}
    \begin{minipage}{5cm}

        \[\begin{pmatrix}
            & & \bullet& \bullet\\
            & \bullet& & \\
            \bullet & & \bullet& 
        \end{pmatrix}\]

        \vspace{5mm}
        \centering $u^-$-pattern
    \end{minipage}
\end{center}

\vspace{3mm}
For a positive integer $m$, let $[m]$ denote the set $\{1,\dots,m\}$.

\begin{thm}\label{thm: linear program}
    Let $N$ be an integer greater than $\Sat(n,   u)$. Define $\mathcal{P}$ to be the set of all index sets $P' \subset [n] \times [N]$ such that $P'$ forms a copy of some $u$-pattern $P$ in the all-ones matrix $\mathbf{1}_{n \times N}$. For each pair $(i, j) \in [n] \times [N]$, let
    \[ A_{i,j} = \{P' \in \mathcal{P} : (i,j) \in P'\} \]
    denote the collection of patterns in $\mathcal{P}$ that include the entry $(i,j)$. Note that $A_{i,j} \neq \varnothing$ for all $(i,j)$, and each pattern $P' \in \mathcal{P}$ satisfies $|P'| = \ell$, where $\ell$ is the size of any $u$-pattern.

    Define $\mathcal{P}^+$, $\mathcal{P}^-$, $A_{i,j}^+$, and $A_{i,j}^-$ analogously for the sets of modified $u$-patterns. Again, each $P' \in \mathcal{P}^\pm$ satisfies $|P'| = \ell$.

    Define variables $x_{i,j}$ for $1\le i\le r$ and $1\le j\le N$ and $y_{P'}$ for each $P'\in \mathcal P$ or $\mathcal P^\pm$. For notational convenience, set \[f(P') :=\sum_{(i,j)\in P'}x_{i,j}.\] Suppose $x_{i,j}$ and $y_{P'}$ satisfy

    \begin{alignat}{2}
        & x_{i,j}\in\{0,1\} &\hspace{1.5cm}& \forall(i,j)\in[n]\times[N] \\
        & y_{P'}\in\{0,1\}  &&       \forall P'\in\mathcal P \\
        & f(P')<\ell  &&       \forall P'\in\mathcal P \label{eqn:not_contain}\\
        & (\ell-1)y_{P'}-f(P') \le0 &&  \forall P'\in\mathcal P\cup \mathcal P^{\pm}  \label{eqn:yp'_cond_1}\\
        & y_{P'}-f(P')\ge-\ell+2 &&  \forall P'\in\mathcal P\cup \mathcal P^{\pm}  \label{eqn:yp'_cond_2}\\
        & \sum_{P'\in A_{i,j}\cup A_{i,j}^+}y_{P'} - \sum_{t=i+1}^{n} x_{t,j}+\sum_{t=j-r+1}^{j+r-2}x_{i,t}\ge 0 &&  \forall(i,j)\in[n]\times[N]  \label{eqn:changeone+above}\\
        & \sum_{P'\in A_{i,j}\cup A_{i,j}^+}y_{P'} - \sum_{t=1}^{i-1} x_{t,j}+ \sum_{t=j-r+2}^{j+r-1}x_{i,t}\ge 0 &&  \forall(i,j)\in[n]\times[N]  \label{eqn:changeone+under} \\
        & \sum_{P'\in A_{i,j}\cup A_{i,j}^-}y_{P'} - \sum_{t=i+1}^{n} x_{t,j}+\sum_{t=j-r+2}^{j+r-1}x_{i,t}\ge 0 &&  \forall(i,j)\in[n]\times[N]  \label{eqn:changeone-above} \\
        & \sum_{P'\in A_{i,j}\cup A_{i,j}^-}y_{P'} - \sum_{t=1}^{i-1} x_{t,j}+ \sum_{t=j-r+1}^{j+r-2}x_{i,t}\ge 0 && \forall(i,j)\in[n]\times[N]   \label{eqn:changeone-under} \\
        & \sum_{i = 1}^n x_{i,j}\le 1 &&  \forall j \in [N]  \label{eqn:onepercol} \\
        & \sum_{i = 1}^n x_{i,j} \ge \sum_{i = 1}^n x_{i,j+1} &&  \forall j \in [N-1]  \label{eqn:leftjustified} \\
        & \sum_{t = j}^{j+r-1} x_{i,t}\le1 && \forall (i,j) \in [n]\times[N-r+1]. \label{eqn:sparsity}\\
        & x_{i,i}=1  && i\in [r-1]\label{eqn: nontrivial}
    \end{alignat}
    
    Then $\displaystyle \min\sum_{i,j}x_{i,j}=\Sat(u, n)$.
\end{thm}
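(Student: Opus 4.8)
The plan is to show that the integer program in \cref{thm: linear program} has the property that its feasible 0-1 solutions correspond exactly to the $u$-saturated sequences (encoded as matrices with one 1 per nonempty column), and that the objective counts the length of the sequence; minimizing then gives $\Sat(n,u)$. The first step is to fix the encoding: given a feasible solution $(x_{i,j})$, constraint \eqref{eqn:onepercol} says each column has at most one 1, and \eqref{eqn:leftjustified} forces the nonempty columns to be an initial segment $j=1,\dots,L$ where $L=\sum_{i,j}x_{i,j}$; so we can read off a sequence $s$ of length $L$ on the alphabet $[n]$ by letting $s_j$ be the unique row index with $x_{s_j,j}=1$. I would then check the easy correspondences: \eqref{eqn:sparsity} is exactly $r$-sparsity of $s$; \eqref{eqn: nontrivial} pins down the initialization $s_1\dots s_{r-1}=1,2,\dots,r-1$ (really just a nondegeneracy condition ensuring all $n$ letters ``could'' be used — one should double-check whether it is needed for the proof or just for convenience, and I suspect the latter); and \eqref{eqn:not_contain} with the definition $f(P')=\sum_{(i,j)\in P'}x_{i,j}$ says that for every placement $P'$ of a $u$-pattern in the grid, not all $\ell$ of its cells are ``on,'' i.e.\ $s$ avoids $u$. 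So far no $y$ variables are needed; they enter only to handle saturation.

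The second, more delicate step is the saturation direction. The variable $y_{P'}$ is meant to be an indicator that ``$P'$ is fully present in $s$'' — more precisely, constraints \eqref{eqn:yp'_cond_1} and \eqref{eqn:yp'_cond_2} together force $y_{P'}=1$ iff $f(P')\ge \ell-1$, i.e.\ iff at least $\ell-1$ of the $\ell$ cells of $P'$ are on (when $P'\in\mathcal P$), and iff all $\ell-1$ cells of a modified pattern are on (when $P'\in\mathcal P^{\pm}$, since those have only $\ell-1$ cells). I would verify this arithmetic: $(\ell-1)y_{P'}\le f(P')$ forces $y_{P'}=0$ unless $f(P')\ge \ell-1$, and $y_{P'}\ge f(P')-\ell+2$ forces $y_{P'}=1$ once $f(P')\ge\ell-1$. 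Then the four inequalities \eqref{eqn:changeone+above}–\eqref{eqn:changeone-under} encode that inserting a letter cannot be done without creating a copy of $u$ or breaking sparsity. The intended reading: inserting letter $i$ into the gap just before column $j$ creates a copy of $u$ iff some $u^+$- or $u^-$-pattern through $(i,j)$ becomes ``almost full'' (the $\sum y_{P'}$ term), or the insertion is blocked by sparsity (an existing $i$ within $r-1$ columns, the $\sum_t x_{i,t}$ term), or — subtly — blocked because column $j$ is already occupied by a letter $t\ne i$, with $t>i$ or $t<i$ handled by the ``above''/``under'' pair (the $\sum_t x_{t,j}$ term). I need to be careful about why there are two versions for each of $+$ and $-$ and why the summation ranges in $t$ differ by one between them: inserting $i$ to the left vs.\ right of the occupant of column $j$, and the corresponding shift in which window of $r$ consecutive columns must be $i$-free. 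The main obstacle, I expect, is precisely this: giving a clean, fully rigorous argument that \eqref{eqn:changeone+above}–\eqref{eqn:changeone-under} hold for a $u$-saturated $s$ and, conversely, that any $s$ satisfying them together with \eqref{eqn:not_contain} is $u$-saturated — the bookkeeping of positions, the off-by-one in the sparsity windows, and the distinction between inserting into an empty gap versus splitting an occupied column all have to be tracked simultaneously, and one must confirm that the $u^{\pm}$-patterns capture exactly the copies of $u$ that an insertion can create.

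With the correspondence in hand, the final step is short: every $u$-saturated sequence $s$ on $[n]$ of length $L\le\Sat(n,u)<N$ yields a feasible 0-1 point with objective $L$ — pad $s$ with empty columns out to width $N$, set $x_{i,j}$ from $s$, and set each $y_{P'}$ to its forced value — so the minimum of the program is $\le\Sat(n,u)$; conversely any feasible 0-1 point gives, by the above, a $u$-saturated sequence of length equal to the objective, so the minimum is $\ge\Sat(n,u)$. Combining, $\min\sum_{i,j}x_{i,j}=\Sat(n,u)$. One small loose end to nail down is that $N>\Sat(n,u)$ guarantees the feasible region is nonempty (an optimal $u$-saturated sequence fits) and that taking $N$ larger never lets a shorter saturated sequence appear — but since a $u$-saturated sequence is a combinatorial object independent of $N$, and \eqref{eqn:leftjustified} makes the trailing empty columns inert, this is immediate. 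I would also remark that although the statement is phrased as an integer program, one genuinely needs the integrality of the $x$ and $y$ variables for the argument (the constraints are not totally unimodular), so ``linear program'' here means ``integer linear program,'' exactly as in the 0-1 matrix analogue of \cite{brahms01matrices}.
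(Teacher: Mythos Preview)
Your proposal is correct and follows essentially the same route as the paper: encode a feasible $(x_{i,j})$ as a sequence via \eqref{eqn:onepercol}--\eqref{eqn:leftjustified}, read off avoidance and sparsity from \eqref{eqn:not_contain} and \eqref{eqn:sparsity}, show \eqref{eqn:yp'_cond_1}--\eqref{eqn:yp'_cond_2} force $y_{P'}=1$ exactly when $f(P')\ge\ell-1$, and then unpack the four constraints \eqref{eqn:changeone+above}--\eqref{eqn:changeone-under} as the saturation condition by casing on whether the inserted letter $i$ lies above or below $s_j$ and to its left or right---the paper carries out precisely this case analysis. One small slip to fix: patterns $P'\in\mathcal P^{\pm}$ have $\ell$ cells (not $\ell-1$), sitting in $\ell-1$ columns with one column doubled; the bound $f(P')\le\ell-1$ you need follows instead from \eqref{eqn:onepercol}, since at most one of the two cells in the doubled column can be on.
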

\begin{proof}
    The interpretations for the conditions are as follows: \cref{eqn:not_contain} says that there is no copy of $u$, \cref{eqn:changeone+above,eqn:changeone+under,eqn:changeone-above,eqn:changeone-under} encode the saturation condition, and \cref{eqn:sparsity} encodes the sparsity condition. The other conditions make sure everything is in the right ``syntax''. 
    
    Suppose we have numbers $x_{i,j},y_{P'}$ satisfying the above conditions. Let $X = [x_{ij}]$. Then by conditions \eqref{eqn:onepercol} and \eqref{eqn:leftjustified}, columns $1,\dots, k$ of $X$ have exactly one $1$, and columns $k+1,\dots, N$ are empty, for some $k$. We define the sequence $s$ on $n$ letters with length $k$, by $s_{j}$ being the unique index such that $x_{s_j,j}=1$. We claim that $s$ is a $u$-saturated sequence. First, condition \eqref{eqn:sparsity} implies $s$ is $r$-sparse. Next, condition \eqref{eqn:not_contain} implies there is no copy of $u$ in $s$, so $s$ avoids $u$. Now we need to show that $s$ is $u$-saturated. Consider $y_{P'}$ for $P'\in \mathcal P \cup \mathcal P^{\pm}$. Note that $f(P')\le \ell-1$ for $P'\in \mathcal P^{\pm}$ since $P'$ only has $\ell-1$ nonzero columns. Conditions \eqref{eqn:yp'_cond_1} and \eqref{eqn:yp'_cond_2} then implies $y_{P'}=0$ if $f(P')<\ell-1$ and $y_{P'}=1    $ if $f(P')=\ell-1$, i.e. $P'$ is missing exactly one letter. Since $P'$ has a column with two ones, it must be one of the ones in that column if $y_{P'}=1$. Now, we claim that conditions \eqref{eqn:changeone+above}, \eqref{eqn:changeone+under},\eqref{eqn:changeone-above},\eqref{eqn:changeone-under} together imply that $s$ is $u$-saturated. Together, they correspond to adding a $i$ either before or after $s_j$. If $j>k$, then $x_{t,j}=0$ for all $t$, so the conditions evidently hold. Now consider the case when $j\le k$, so $x_{s_j,j}=1$. If $i=s_j$ then $x_{i,j}=x_{s_j,j}=1$ so all of the conditions are satisfied. Now suppose $i<s_j$. Then $x_{s_j,j}=1$ so $\sum_{t=i+1}^n x_{t,j}=1$, and $\sum_{t=1}^{i-1}x_{t,j}=0$. Thus conditions \eqref{eqn:changeone+under} and \eqref{eqn:changeone-under} are satisfied. We now claim that \eqref{eqn:changeone+above} corresponds to adding the letter $i$ before $s_j$. Note that \[\sum_{t=i+1}^{n} x_{t,j}=1\] implies that one of the following conditions is true:
    \begin{itemize}
    \setlength\itemsep{5pt}
        \item  $y_{P'}\ge 1$ for some $P'\in A_{ij}$, or
        \item $y_{P'}\ge 1$ for some $P'\in A_{ij}^+$, or
        \item $\displaystyle \sum_{t=j-r+1}^{j+r-2}x_{i,t} \ge 1$.
    \end{itemize}
    \begin{table}[ht]
    \centering
    \renewcommand{\arraystretch}{1.2}
    \setlength{\tabcolsep}{10pt}
    \begin{NiceTabular}{ccccc}
        $u$ & &$n$ & $\Sat(n,u)$& $u$-saturated sequence\\
        \toprule
         $aba$&& $2$ & $2$  & $12$\\
         $aba$& &$3$ & $3$  & $123$\\
         $aba$& &$4$ & $4$  & $1234$\\
         \hdottedline
         $abab$& &$2$ & $3$  & $121$\\
         $abab$& &$2$ & $5$  & $12321$\\
         \hdottedline
         $aabb$& &$2$ & $5$  & $12121$\\
         $aabb$& &$3$ & $7$  & $1232123$\\
         \hdottedline
         $aaab$& &$2$ & $5$  & $12121$\\
         $aaab$& &$3$ & $7$  & $1213231$\\
         \hdottedline
         $ababa$& &$2$ & $4$  & $1212$\\
         $ababa$& &$3$ & $7$  & $1212323$\\
         \hdottedline
         $ababb$& &$2$ & $5$  & $12121$\\
         $ababb$& &$3$ & $7$  & $1212313$\\
         \hdottedline
         $ababa$& &$2$ & $4$  & $1212$\\
         $ababa$& &$3$ & $7$  & $1212323$\\
         \hdottedline
         $abca$& &$3$ & $3$  & $123$\\
         $ababa$& &$4$ & $4$  & $1234$\\
         \hdottedline
         $abcb$& &$3$ & $3$  & $1231$\\
         $abcb$& &$4$ & $5$  & $12341$\\
         \hdottedline
         $abcc$& &$3$ & $5$  & $12312$\\
         $abcc$& &$4$ & $6$  & $123142$\\
         \hdottedline
         $abcba$& &$3$ & $3$  & $1231$\\
         $abcba$& &$4$ & $5$  & $12341$\\
         \toprule
         
    \end{NiceTabular}
    \vskip 4pt
    \caption{Exact Values of $\Sat(n,u)$}
    \label{tab: linear program}
\end{table}

    The third is easily seen to be equivalent to violating the sparsity condition if $i$ is inserted before $s_{j}$. The second implies that if we make the $(i,j)$ spot into a $1$, we get a copy of $P'$ for some $P'\in A_{ij}^+$. Since column $j$ is the only one with two nonzero entries, both $(i,j)$ and $(s_j,j)$ must occur in $P$. Then, by listing out the row-indices of this copy from left to right, putting the one for $(i,j)$ before the one for $(s_j,j)$, we get a copy of $u$. Finally, we similarly get that the first implies that an $i$ before the $s_j$ induces a copy of $u$ in $s$. 
    
    Similarly, condition \eqref{eqn:changeone-above} implies that inserting an $i$ after $s_{j}$ induces a copy of $u$, and if $s_j>i$, we have a similar proof. Thus, $s$ is $u$-saturated. Note that \eqref{eqn: nontrivial} implies that $s$ is nontrivial. Furthermore, it is clear that any $u$-saturated sequence has length $\ge \ell -1 \ge r-1$, so by an appropriate renaming of the letters in $s$, we can make \eqref{eqn: nontrivial} be satisfied.

    Likewise, for any $u$-saturated sequence $s$, we can define numbers $x_{i,j}$ by reversing the above process. Then conditions \eqref{eqn:yp'_cond_1} and \eqref{eqn:yp'_cond_2} uniquely determine the $y_{P'}$, and using the fact that $s$ is $u$-saturated, we can simply reverse the above arguments to show that the remaining conditions are satisfied. Since the length of $s$ is equal to $\sum x_{i,j}$, this completes the proof.
\end{proof} 
The choice of the integer $N$ above can be found by trial and error (or by finding some $u$-saturated sequence with length $N$). 

Using the SAGE implementation of this linear program in \cite{kanungocode}, we get the results in \cref{tab: linear program}. Note that when $n=r$ is the number of distinct letters of $u$, we see that $\Sat(r,u)$ is simply the length of $s_r(u)$. This is easy to prove in general, because any $r$-sparse sequence on $[r]$ is isomorphic to a sequence of the form $12\dots r12\dots $.

It is not feasible to use this linear program to compute $\Sat(n,u)$ when $u$ is long or $n$ is large, because that means that $N$ will be large, implying that the number of variables blows up. We wonder if the algorithm can be improved to compute more nontrivial values of $\Sat(n,u)$.

\section*{Acknowledgements}
I am grateful to my mentor, Jesse Geneson, for proposing this project and for his guidance during the preparation of this paper. I thank James Alitzer for collaborating on the linear program in \cref{sec: linear program}. While working on this paper, I was part of MIT PRIMES-USA, a year-long math research program hosted by MIT. I would like to express my gratitude to the PRIMES program and its organizers for making this experience possible. 

\printbibliography

\end{document}